\newtheorem*{lemma}{Lemma}
\newtheorem*{prop}{Proposition}
\newtheorem*{thm}{Theorem}
\newtheorem*{cor}{Corollary}
\newcommand{\iso}{\overset{\sim}{\rightarrow}}
\newcommand{\nc}{\newcommand}
\nc{\ad}{\operatorname{ad}} \nc{\tr}{\operatorname{tr}}
\nc{\tp}{\operatorname{top}} \nc{\rank}{\operatorname{rank}}
\nc{\corank}{\operatorname{corank}}
\nc{\codim}{\operatorname{codim}} \nc{\sdim}{\operatorname{sdim}}
\nc{\mult}{\operatorname{mult}} \nc{\Index}{\operatorname{Index}}
\nc{\spn}{\operatorname{span}} \nc{\Sym}{\operatorname{Sym}}
\nc{\sym}{\operatorname{sym}} \nc{\id}{\operatorname{id}}
\nc{\Id}{\operatorname{Id}} \nc{\Ree}{\operatorname{Re}}
\nc{\htt}{\operatorname{ht}}
\nc{\Ker}{\operatorname{Ker}}
\nc{\rker}{\operatorname{rKer}}
\nc{\im}{\operatorname{Im}}
\nc{\osp}{\mathfrak{osp}}
\nc{\sgn}{\operatorname{sgn}}
\nc{\F}{\operatorname{F}}
\nc{\Mod}{\operatorname{Mod}}
\nc{\Mat}{\operatorname{Mat}}
\nc{\Soc}{\operatorname{Soc}}
\nc{\Inj}{\operatorname{Inj}}
\nc{\Hom}{\operatorname{Hom}}
\nc{\End}{\operatorname{End}}
 \nc{\supp}{\operatorname{supp}}
\nc{\Card}{\operatorname{Card}}
\nc{\Ann}{\operatorname{Ann}}
\nc{\Ind}{\operatorname{Ind}}
\nc{\Coind}{\operatorname{Coind}}
\nc{\wt}{\operatorname{wt}}
\nc{\ch}{\operatorname{ch}}
\nc{\Stab}{\operatorname{Stab}}
\nc{\Sch}{{\mathcalS}\mbox{\emch}}
\nc{\Irr}{\operatorname{Irr}}
 \nc{\Spec}{\operatorname{Spec}}
\nc{\Prim}{\operatorname{Prim}}
\nc{\Aut}{\operatorname{Aut}}
\nc{\Ext}{\operatorname{Ext}}
\nc{\Fract}{\operatorname{Fract}}
 \nc{\gr}{\operatorname{gr}}
\nc{\deff}{\operatorname{def}}
\nc{\HC}{\operatorname{HC}}
\begin{document}

\title[Slices]{Slices for biparabolics of Index 1$^1$ }

\author[Anthony Joseph and Florence Millet]
{}

\date{\today}
\maketitle

$$\begin {matrix}&$\textsc{Anthony Joseph}$\\&$Donald Frey Professional
Chair$\\&$Department of Mathematics$
\\&$Weizmann Institute of Science$\\&$2 Herzl Street$\\&$Rehovot, 76100,
Israel$\\&$anthony.joseph@weizmann.ac.il$ \\\end{matrix}
\quad \quad\quad \begin {matrix}&$\textsc{Florence
Fauquant-Millet}$\\&$Universit\'e de Lyon, F-42023 Saint-Etienne,
France$\\&$Laboratoire de Math\'ematiques de l'Universit\'e de
Saint-Etienne$\\&$Facult\'e des Sciences et Techniques$\\&$23, rue
du Docteur Paul Michelon$
\\&$F-42023
Saint-Etienne C\'edex 02 France$
\\&$florence.millet@univ-st-etienne.fr$
\\\end{matrix}$$

 \footnotetext[1]{Work supported in part by
Israel Science Foundation Grant, no. 710724.}

%\title[Slices]{Slices for biparabolics of Index 1$^1$}
%\authors{Anthony Joseph \address Donald Frey Professional
%Chair\\
%Department of Mathematics
%\\
%Weizmann Institute of Science\\2 Herzl Street,\\ Rehovot, 76100,
%Israel \email anthony.joseph@weizmann.ac.il \and Florence
%Fauquant-Millet
%\address Universit\'e de Lyon, F-42023 Saint-Etienne, France \\Laboratoire de Math\'ematiques\\de l'Universit\'e de
%Saint-Etienne,\\
%Facult\'e des Sciences et Techniques,\\
%23, rue du Docteur Paul Michelon\\
%F-42023 Saint-Etienne C\'edex 02 France \email
%florence.millet@univ-st-etienne.fr}
%
%
%
%
%
% \footnotetext[1]{Work supported in part by
%Israel Science Foundation Grant, no. 710724.}
%
%\date{\today}
%\maketitle

\

\

 Key Words: Invariants, Slices, Nil-cones.
\medskip

 AMS Classification: 17B35

\

 \textbf{Abstract}

 Let $\mathfrak a$ be an algebraic Lie subalgebra of a simple Lie
 algebra $\mathfrak g$ with index $\mathfrak a \leq \rank \mathfrak g$.
 Let $Y(\mathfrak a)$ denote the algebra of
 $\mathfrak a$ invariant polynomial functions on $\mathfrak a^*$.
 An algebraic slice for $\mathfrak a$ is an affine subspace $\eta
 +V$ with $\eta \in \mathfrak a^*$ and $V \subset \mathfrak a^*$ a
 subspace of dimension index $\mathfrak a$ such that restriction of
 function induces an isomorphism of $Y(\mathfrak a)$ onto the
 algebra $R[\eta+V]$ of regular functions on $\eta+V$.

 Slices have been obtained in a number of cases through the
 construction of an adapted pair $(h,\eta)$ in which $h \in
 \mathfrak a$ is ad-semisimple, $\eta$ is a regular element of
 $\mathfrak a$ which is an eigenvector for $h$ of eigenvalue minus one and
 $V$ is an $h$ stable complement to $(\ad \mathfrak a)\eta$ in
 $\mathfrak a^*$.  The classical case is for $\mathfrak g$ semisimple
 \cite {K1}, \cite {K2}.  Yet rather recently many
 other cases have been provided. For example if $\mathfrak g$ is of type $A$ and
 $\mathfrak a$ is a ``truncated biparabolic" \cite {J7} or a
 centralizer \cite {J8}. In some of these cases (particular
 when the biparabolic is a Borel subalgebra) it was found \cite
 {J8}, \cite {J9}, that $\eta$ could be taken to be the restriction of a
 regular nilpotent element in $\mathfrak g$.  Moreover this
 calculation suggested \cite {J8} how to construct slices outside type $A$
 when no adapted pair exists.

 This article makes a first step in taking these ideas further.
 Specifically let $\mathfrak a$ be a truncated biparabolic of index one.
 (This only arises if $\mathfrak g$ is of type $A$ and $\mathfrak a$ is
 the derived algebra of a parabolic subalgebra whose Levi factor has just two blocks whose sizes are
 coprime.)   In this case it is
 shown that the second member of an adapted pair $(h,\eta)$
 for $\mathfrak a$ is the restriction of a particularly carefully
 chosen regular nilpotent element of $\mathfrak g$.

 A by-product of the present analysis is the construction of an
 invariant associated to a pair of coprime integers.

\section{Introduction}

Unless mentioned to the contrary the base field $\mathbb K$ is
assumed algebraically closed of characteristic zero.

\subsection{Invariants}\label{1.1}

Let $\mathfrak a$ be a finite dimensional Lie algebra,
$S(\mathfrak a)$ its symmetric algebra and $K(\mathfrak a)$ the
field of fractions of $S(\mathfrak a)$.  If $A$ is algebra in
which $\mathfrak a$ acts by derivations, set $A^\mathfrak a = \{a
\in A|xa =0, \forall x \in \mathfrak a \}$. It is a subalgebra of
$A$.

Given $\xi \in \mathfrak a^*$, set $\mathfrak a^\xi= \{a \in
\mathfrak a| a\xi =0\}$, that is the stabilizer of $\xi$ under
co-adjoint action.  It is a Lie subalgebra of $\mathfrak a$.

Define index $\mathfrak a:= \min_{\xi \in \mathfrak a^*}\dim
\mathfrak a^\xi$.  Set $\mathfrak a^*_{reg} = \{\xi \in \mathfrak
a^*|\dim \mathfrak a^\xi = \text {index}\ \mathfrak a\}$, called
the set of regular elements of $\mathfrak a^*$.

\smallskip

A problem of Dixmier \cite [Problem 4]{D} suggests that
$C(\mathfrak a):=K(\mathfrak a)^\mathfrak a$, is always a pure
transcendental extension of $\mathbb K$.

One may further ask under what conditions is $Y(\mathfrak
a):=S(\mathfrak a)^\mathfrak a$ a polynomial algebra.

\subsection{Slices}\label{1.2}

In \cite [Sect. 7] {J9} we focused some attention on refinements
of these questions.  Here it is convenient to assume that
$S(\mathfrak a)$ admits no proper semi-invariants.  In this case
$C(\mathfrak a)$ is just the field of fractions of $Y(\mathfrak
a)$.  Moreover under this hypothesis, Ooms and Van den Bergh \cite
[Prop. 4.1]{OV} have shown that the growth rate (that is
Gelfand-Kirillov dimension) of $Y(\mathfrak a)$ takes its maximum
possible value, namely index $\mathfrak a$.

Under the above hypothesis define a rational slice to be an affine
translate $\eta +V \subset \mathfrak a^*$ of a vector subspace of
$V$ of $\mathfrak a^*$ such that the restriction of functions
gives on injection $\theta$ of $Y(\mathfrak a)$ into the algebra
of regular functions $R[\eta+V]$ on $\eta +V$ and induces an
isomorphism of fields of fractions. Observe that $R[\eta+V]$
identifies with $S(V^*)$ and then comparison of transcendence
degrees implies that $\dim V = \text {index} \ \mathfrak a$.  We
call $\eta$ the base point of the slice $\eta+V$.

\smallskip

We suggested that a rational slice always exists \cite [7.11]{J9}.

\smallskip

Define an algebraic slice to be a rational slice for which
$\theta$ is an isomorphism.  Obviously this implies that
$Y(\mathfrak a)$ is a polynomial algebra; but we found an example
(\cite [11.4, Example 2]{J9}) for which the converse is false.

In view of this counter-example it would seem appropriate to
suggest that if $Y(\mathfrak a)$ is polynomial then there exists
an affine subspace $\eta +V \subset \mathfrak a^*$  such that
restriction of functions gives an embedding $Y(\mathfrak a)
\hookrightarrow R[\eta + V]\iso S(V^*)$ whose image takes the form
$S(V^*)^G$ for some finite (pseudo-reflection) group $G$ acting
linearly on $V^*$.

Finally we remark that the notions of a rational or algebraic
slice were given (\cite [Sect. 7]{J9} natural geometric
interpretations in the case when $A$ is a connected algebraic
group with Lie algebra $\mathfrak a$, that is when $\mathfrak a$
is algebraic. In particular $A(\eta + V)$ must be dense \cite
[7.9]{J9} (but not necessarily open \cite [11.4, Example 3] {J9})
in $\mathfrak a^*$. Thus $\eta +V$ must meet most regular orbits
(defined as those of codimension equal to index $\mathfrak a$).
However even in the case of an algebraic slice not every regular
orbit need pass \cite [8.12(ii)] {J4} through $\eta +V$, nor need
every orbit meeting $\eta +V$ be regular \cite [11.4, Example
3]{J9}.  In particular the base point $\eta$ need not be regular.

\subsection{Adapted Pairs}\label{1.3}

An adapted pair $(h,\eta)$ for a finite dimensional Lie algebra
consists of a regular element $\eta \in \mathfrak a^*$ and an
element $h \in \mathfrak a$ such $h\eta = -\eta$ with respect to
co-adjoint action.  Such pairs are rather hard to find, it being
particularly difficult to check regularity.

Assume that $\mathfrak a$ is an algebraic Lie algebra.  Then in
the above we may use Jordan decomposition to show that the
(adjoint) action of $h$ on $\mathfrak a$ can be taken to be
reductive without loss of generality. As $\mathfrak a^\eta$ is $h$
stable we may define $\{m_i\}_{i=1}^{ \text {index} \ \mathfrak
a}$, to be the set of eigenvalues (counted with multiplicities) of
$-h$ acting on $\mathfrak a^\eta$. These can be rather arbitrary
and may depend on the choice of the adapted pair \cite [8.3]{JS}.
However suppose that $S(\mathfrak a)$ admits no proper
semi-invariants and that $Y(\mathfrak a)$ is polynomial. Then by
\cite [Cor. 2.3]{JS} the degrees of the homogeneous generators of
$Y(\mathfrak a)$ are the $m_i+1:i=1,2,\ldots,\text {index}\
\mathfrak a$, and moreover $\eta +V$ is an algebraic slice for any
$h$ stable complement $V$ to $\mathfrak a \eta$ in $\mathfrak
a^*$. (This is actually proved under a slightly weaker hypothesis
which allows $\mathfrak a$ to be the centralizer $\mathfrak g^x$
in a semisimple Lie algebra $\mathfrak g$.)  Thus the $\{m_i\}$
generalize the so-called ``exponents" defined classically for
$\mathfrak a$ semisimple.

In the above situation every element of $\eta + V$ is regular (see
for example \cite [7.8]{J9}) by a standard deformation argument.

\subsection{The Nilpotent Cone}\label{1.4}

One would like to have a systematic way to construct algebraic
slices.   In this we make the rather bold suggestion below.  It
should be regarded more as a signpost rather than a serious
conjecture.

Suppose that $\mathfrak a$ is an algebraic subalgebra of a
semisimple Lie algebra $\mathfrak g$. Let $G$ be the adjoint group
of $\mathfrak g$ and $A$ the unique closed subgroup whose Lie
algebra is $\mathfrak a$.

Assume that index $\mathfrak a \leq \rank \mathfrak g$. This is
the case if $\mathfrak a$ is a biparabolic subalgebra \cite {FJ2},
\cite {J3} or a centralizer (via the now known truth of the
Elashvili conjecture \cite {CM}, \cite {G}, \cite {Y1}).

Further assume that $\mathfrak g$ admits a Chevalley
antiautomorphism $\kappa$ such that $\mathfrak a$ and
$\kappa(\mathfrak a)$ are non-degenerately paired through the
Killing form $K$ on $\mathfrak g$.  This is clearly the case for
(truncated) biparabolics.  It is well-known for a centralizer -
see \cite [4.5,4.6] {JS} for details and references.  It is less
clear that this condition is really necessary.  We use it mainly
for convenience.

Under the above hypothesis we may and will identify $\mathfrak
a^*$ with the subalgebra $\kappa(\mathfrak a)$ of $\mathfrak g$.

Let $\mathfrak k$ be the kernel of the restriction map $\mathfrak
g^* \rightarrow \mathfrak a^*$. Identifying $\mathfrak g^*$ with
$\mathfrak g$ through the Killing form we may view $\mathfrak k$
as a subspace of $\mathfrak g$.

Obviously $\mathfrak k$ is $A$ stable.  In view of the above
identifications we may write $A(\xi +\mathfrak k)=A\xi + \mathfrak
k$, for all $\xi \in \mathfrak a^*$.  In particular if $\xi \in
\mathfrak a^*_{reg}$, then codim $A(\xi+\mathfrak k) \leq \rank
\mathfrak g$.

Let $\mathscr N(\mathfrak g)$ denote the cone of ad-nilpotent
elements of $\mathfrak g$.  As is well-known, codim $\mathscr
N(\mathfrak g)=\rank \mathfrak g$.  Moreover $\mathscr N(\mathfrak
g)$ is irreducible and admits only finitely many $G$ orbits.  In
particular $\mathscr N(\mathfrak g)_{reg}$ consists of a dense
open orbit.

Now let $(h,\eta)$ be an adapted pair for $\mathfrak a$.  The
relation $h\eta = -\eta$, forces $\eta \in \mathscr N(\mathfrak
g)$.  However it is almost never the case that  $\eta \in \mathscr
N(\mathfrak g)_{reg}$. (For example, in \cite [Sect. 10] {J7} a
detailed study of the nilpotent orbit to which $\eta$ belongs was
made in the case of an adapted pair $(h,\eta)$ of a truncated
biparabolic in type $A$.)

In view of the above codimensionality estimates we propose the

\smallskip

\textbf{Suggestion}.  \textit{Suppose $(h,\eta)$ is an adapted
pair for $\mathfrak a$. Then $\eta+\mathfrak k \cap \mathscr
N(\mathfrak g)_{reg}$ is non-empty.}

\smallskip

\textbf{Remarks}.  This just means that there is some pre-image of
$\eta \in \mathfrak g^*$ lying in $\mathscr N(\mathfrak g)_{reg}$.
It could be proved by showing that the codimension of $\mathscr
N(\mathfrak g) \cap A(\eta +\mathfrak k)$ in $\mathfrak g^*$
equals $\rank \mathfrak g$, though this is likely to be rather
difficult if even true. It suggests that one should construct
$\eta$ as the restriction of some element of $\mathscr N(\mathfrak
g)_{reg}$. The fact that $\eta$ itself does not belong to
$\mathscr N(\mathfrak g)_{reg}$ is just a consequence of having
made a particular choice of its pre-image in $\mathfrak g$.
However it is this choice which allows one to guess $\eta$, itself
a rather hard task as explained in \cite [1.3]{J7}. A main point
that lies behind our suggestion is that there may be a more
``canonical" choice which leads to a slice making sense for a
biparabolic or centralizer of an arbitrary semisimple Lie algebra.
For the moment it is not too clear if this can be divined.
Hopefully the present article will provide a clue. Here we should
also stress that adapted pairs are far from unique even up to the
obvious conjugation \cite [1.4]{J7}.  Our suggestion is also
partly motivated by Question (5) of \cite [Sect. 11]{J7}.

\subsection{}\label{1.5}

Let us recall some cases in which the above suggestion has a
positive answer.

Choose a Cartan subalgebra $\mathfrak h$ of $\mathfrak g$ and let
$\Delta \subset \mathfrak h^*$ denote the set of non-zero roots of
$\mathfrak g$ with respect to $\mathfrak h$.  For each $\alpha \in
\Delta$ let $x_\alpha$ be a non-zero vector in $\mathfrak g$ of
weight $\alpha$.

let $\pi \subset \Delta$ be a choice of simple roots and set
$\Delta^+=\Delta\cap \mathbb N \pi$, $\mathfrak n = \sum_{\alpha
\in \Delta^+}\mathbb Kx_\alpha$ and $\mathfrak b = \mathfrak h +
\mathfrak n$, which is a Borel subalgebra.  Let $N,H,B$ be the
corresponding closed subgroups of $G$.

First assume that $\mathfrak a$ is a centralizer, that is of the
form $\mathfrak g^x$.  (Here we can assume $x$ nilpotent without
loss of generality and we shall always do this.)

Suppose $\xi$ is a regular element of $(\mathfrak g^x)^*$, for
example coming from the second factor in an adapted pair. Then
under the identifications made in \ref {1.4}, it follows from
\cite [Lemma 2.2]{J8} (which was inspired by the proof of the
Vinberg inequality) and the truth of the Elashvili conjecture that
$x+t\xi$ is a regular element of $\mathfrak g^*$ for all $t$
belonging to a cofinite subset $\Omega \subset \mathbb K$. However
in general $x+t\xi$ will not be nilpotent. Rather for $\xi$ in
general position $x+t\xi:t \in \Omega$ will be semisimple \cite
[5.5]{J8}.

Conversely if $\mathfrak g$ is of type $A$, then we may take $x$
in Jordan form (defined by an ordered partition $\textbf{x}$ of
$n$) and then ``complete" it to a standard regular nilpotent
element. More precisely up to conjugation we can write $\sum
_{\alpha \in \pi'} x_\alpha$, with $\pi' \subset \pi$
corresponding to $\textbf{x}$. Set $y'=\sum _{\alpha \in \pi
\setminus\pi'} x_\alpha$. Then $x+y'=\sum_{\alpha \in
\pi}x_\alpha$, which is the standard presentation of a regular
nilpotent element.

Now consider $y'$ as an element of $(\mathfrak g^x)^*$ through the
Killing form $K$.  (With respect to what we said in \ref {1.4} we
can arrange for $x, y:=\kappa(x)$ to generate a Jacobson-Morosov
s-triple containing $x$. In this $\mathfrak g^y= \kappa(\mathfrak
g^x)$, contains $y'$ and is non-degenerately paired to $\mathfrak
g^x$ through $K$, so identifies with $(\mathfrak g^x)^*$.) A basic
result proved in \cite [Thm. 4.7]{J8} is that there exists $h' \in
\mathfrak h\cap \mathfrak g^x$ making $(h',y')$ an adapted pair
for $\mathfrak g^x$. Now clearly $K(x,\mathfrak g^x)=0$ and so $x
\in \mathfrak k$.  We conclude that for this particular adapted
pair the suggestion of \ref {1.4} has a positive answer.

Notice that this construction makes sense for nilpotent orbits
generated by a subset of the simple root vectors (called
Bala-Carter orbits or orbits of Cartan type) for any simple Lie
algebra. However $y'$ obtained in this fashion is seldom regular
in $(\mathfrak g^x)^*$ and in fact regularity requires a very
careful choice of $\pi'$ . An interesting case is when card $\pi'
=1$, say $\pi' =\{\alpha\}$. Notice that if $\alpha$ is a long
root, then $\mathfrak g^{x_\alpha}$ is conjugate to the
centralizer of the highest root vector which is also a standard
truncated parabolic subalgebra. Now forgetting type $A_{2n}$,
which is just the case when the Coxeter number is odd, a simple
root system contains a distinguished long root defined in terms of
its Dynkin diagram. This is the central root in type $A_{2n+1}$,
the root with three neighbours in types $D,E$ and the unique long
root with a short root neighbour in types $B,C,F,G$. (For a
further interpretation relating this construction to the highest
root, see \cite [2.14]{J9}.)

If one chooses the (long) simple root $\alpha$ as above, then in
all cases (except $E_8$) the element $y'$ as defined above can be
completed to an adapted pair \cite [Sect. 6]{J8}; but this
generally fails if one takes $\alpha$ to be an arbitrary long
simple root. This again verifies our suggestion for that
particular pair, showing in addition that the question is rather
delicate. In type $E_8$, the element $y'$ is not regular \cite
[6.14]{J8} in $(\mathfrak g^x)^*$ and it is not known if the
latter algebra admits an adapted pair.   After Yakimova \cite {Y2}
the invariant algebra $Y(\mathfrak g^x)$ is not polynomial.

Our suggestion was also found to hold for some adapted pairs for
the (truncated) Borel subalgebra in type $A$.  In this case the
specification of a Borel subalgebra implies a choice of a set
$\pi$ of simple roots and it was found that the regular nilpotent
element in the conclusion of the suggestion was obtained from
$\sum_{\alpha \in \pi}x_{-\alpha}$ through conjugation by a rather
carefully chosen element of the Weyl group $W:=N_G(H)/H$.  It
turned out that this element of $W$ made sense for all simple Lie
algebras and through its use we were able to construct \cite [Thm.
9.4]{J9} an algebraic slice for a truncated Borel in all types
except $C,B_{2n},F_4$ even though an adapted pair does not exist
(outside type $A$). Here the base point $\eta$ was not regular but
still satisfied the conclusion of Suggestion \ref {1.4}. Obviously
we should like to take these last observations further.

We remark that the index of a (truncated) parabolic (resp.
biparabolic) was calculated in \cite {FJ2} (resp. \cite {J3}) and
that in most cases (all cases for types $A,C$) the invariant
algebra was shown to be polynomial (\cite {FJ1}, \cite {J3}).  In
type $A$ an adapted pair was constructed for all truncated
biparabolics \cite {J7}.  For a centralizer $\mathfrak g^x$ of a
simple Lie algebra $\mathfrak g$ the invariant algebra was shown
\cite {PPY} to be polynomial in many cases (all cases in types
$A,C$), whilst in type $A$, or for a long root vector outside type
$E_8$, the above construction of an adapted pair (which has the
additional property of being ``compatible") allows one to prove
very easily \cite [Thm. 3.5]{J8} the polynomiality of $Y(\mathfrak
g^x)$.

\subsection{}\label{1.6}

The purpose of the present article is to verify our suggestion for
(truncated) biparabolics of index one. As noted in \cite
[2.2,2.3]{J5}, these are described as the derived algebras of
maximal parabolic subalgebras in type $A$ for which the Levi
factor consists of two blocks of coprime sizes $p,q$.  In this we
shall take $p<q$ with the smaller block in the top left hand
corner. The parabolic is assumed to have Levi factor having these
two blocks and with nilradical $\mathfrak m$ being the
\textit{lower} left hand corner block and thus is spanned by root
vectors in which the ``non-compact" simple root, namely $\alpha_p$
in the Bourbaki notation \cite [Planche I] {B}, occurs with
coefficient $-1$ in every root of $\mathfrak m$. The truncated
parabolic $\mathfrak p$ is just the derived algebra of the above.
(Though it might be more appropriate to denote it by $\mathfrak
p'$, this would just be cumbersome and in any case we do not need
to refer to the parabolic itself.) We denote by $P$ the closed
subgroup of $G$ with Lie algebra $\mathfrak p$. An adapted pair
for $\mathfrak p$ was constructed in \cite {J5}. A rather unusual
(but easily proven - see \ref {1.8}) aspect of the index one case
is that such a pair is unique up to conjugation by an element of
$P$. Moreover via \cite [Cor. 8.7]{J4}, every regular orbit meets
the resulting slice at exactly one point and even transversally
(see \cite [Prop. 7.8(ii)]{J9} for example). This is the exact
analogue of the result obtained in the semisimple case by Kostant
\cite {K1}, \cite {K2}.

\subsection{}\label{1.7}

The proof that our suggestion holds in the above case is obtained
from the combinatorial analysis given in the following two
sections.  This turned out to be surprisingly difficult though
ultimately we believe the solution is rather elegant.   However
unlike the Borel case and the case $p=1$, the element in its
conclusion is obtained from a standard nilpotent element not just
by conjugation through $W$ but rather by an element of the form
$n_wb$, with $n_w$ a representative of $w \in W$ lying in $N_G(H)$
and $b \in B$.  Moreover we give a recipe for computing $w$, but
at present its meaning is unclear.

\subsection{}\label{1.8}

Define $\mathfrak p$ as in \ref {1.6}.  Let us recall the
construction of an adapted pair for $\mathfrak p$ given in \cite
{J5}.

Let $\mathfrak h'$ denote the set of diagonal matrices lying in
$\mathfrak p$.  One has $\dim \mathfrak h' = n-2$.

Let $\pi := \{\alpha_i\}_{i=1}^{n-1}$ be the set of simple roots
for $\mathfrak {sl}(n)$ labelled as in Bourbaki \cite [Planche
I]{B} with respect to the Borel subalgebra $\mathfrak b$ being the
set of upper triangular matrices of trace zero.

Identify $\mathfrak p^*$ with $\mathfrak p^-:=\kappa (\mathfrak
p)$.  Recall that we are assuming $\mathfrak b \subset \mathfrak
p^-$.  The nilradical $\mathfrak m$ of $\mathfrak p$ is a
complement to $\mathfrak p^-$ in $\mathfrak g$ and identifies via
the Killing form with the kernel of the restriction map $\mathfrak
g^* \rightarrow \mathfrak p^*$, that is $\mathfrak m =\mathfrak
k$, in the notation of \ref {1.4}. Under the present conventions
$\mathfrak m$ is spanned by those vectors corresponding to roots
in which $\alpha_p$ appears with a coefficient of $-1$.  (This
convention should be recalled in \ref {2.6} b).)

Recall (cf \cite [2.5]{J5}) the notion of the Kostant cascade
$\mathscr B$ (of positive strongly orthogonal roots) defined for
any semisimple Lie algebra. For $\mathfrak {sl}(n)$ this is just
$\{\alpha_i+\alpha_{i+1}+,\ldots,+\alpha_{n-i}\}_ {i =1}^{
[(n-1)/2]}$.  In particular $\mathscr B \cap \pi \neq \phi$, if
and only if $n$ is even and then this intersection is
$\{\alpha_{n/2}\}$.

By definition, the Levi factor of $\mathfrak p$ is isomorphic to
$\mathfrak {sl}(p) \times \mathfrak {sl}(q)$.  Let $-\mathscr B'$
denote its Kostant cascade.

Since $q,p$ are coprime, exactly one of the integers $p,q,n$ is
even and so $(\mathscr B\cup \mathscr B') \cap (\pi \cup -\pi)$
consists of exactly one element, say $\alpha$.  Set
$\overline{\mathscr B}:=\mathscr B\cup\mathscr B' \setminus
\{\alpha\}$.  One may remark that up to signs $\mathscr B\cup
\mathscr B'$ is a choice of simple roots \cite [2.6]{J5}. In
particular $\mathscr B\cup \mathscr B'$ is a basis for $\mathfrak
h^*$.

A main result of \cite {J5} is that
$$\eta:= \sum_{\beta \in \overline{\mathscr B}} x_\beta,  \eqno
{(*)}$$ is regular (see \cite [3.7]{J5}) in $\mathfrak p^*$ and
that there exists a unique $h \in \mathfrak h'$ such that $h\eta
=-\eta$, with respect to co-adjoint action.  Thus $(h,\eta)$ is an
adapted pair for $\mathfrak p$.

It is checked in \cite [3.7]{J5} that $\mathbb K x_\alpha$ is a
complement to $\mathfrak p \eta$ in $\mathfrak p^*$.  It is
further checked \cite [3.3]{J5} that $hx_\alpha=mx_\alpha$, where
$m+1$ is the degree, namely $\frac{p^2+q^2+pq-1}{2}$, of the
unique homogeneous generator $f$ of $Y(\mathfrak p)$.  Moreover
$f$ is irreducible.  Indeed otherwise it could not be the
generator of $Y(\mathfrak p)$.  We remark that for $p=1$ there is
a rather precise and elegant description of $f$ discovered
independently by Dixmier and Joseph (see \cite [15]{FJ1} and
references therein).  However even for the case $p=2,q=3$ a simple
description of $f$ is not known.

Let $\mathscr N(\mathfrak p)$, or simply $\mathscr N$, denote the
zero set of $f$ in $\mathfrak p^*$.  By the above remarks
$\mathscr N$ is an irreducible closed subvariety of codimension
one in $\mathfrak p^*$.  In more general terms if $\mathfrak a$ is
an algebraic Lie algebra, then $\mathscr N(\mathfrak a)$ is
defined to be the nilfibre of the categorical quotient map
$\mathfrak a^* \rightarrow \mathfrak a^*//A$.  It is seldom
irreducible even for a truncated biparabolic \cite [1.4]{J7}.

The relation $h\eta = -\eta$, forces $P\eta \subset \mathscr N$.
Since $P\eta$ has codimension index $\mathfrak p =1$ in $\mathfrak
p^*$, it follows from the above that $P\eta$ is open dense in
$\mathscr N$ and consequently $P\eta = \mathscr N_{reg}$.  In
particular if $(h',\eta')$ is a second adapted pair for $\mathfrak
p$, then $\eta' \in P\eta$.  Moreover since $h$ is uniquely
determined by $\eta$, we conclude that there exists $p \in P$ such
that $ph=h',p\eta =\eta'$.

Through the identifications made in \ref {1.4} we may consider
$\eta$ as an element of $\mathfrak g^*$.  Then the relation $h\eta
= -\eta$ forces $\eta \in \mathscr N(\mathfrak g)$, but one
\textit{cannot} conclude that $P\eta \subset \mathscr N(\mathfrak
g)$.   Again $\eta$ is not regular in $\mathscr N(\mathfrak g)$.
Here one should stress that by identifying $\mathfrak p^*$ with
$\kappa(\mathfrak p)$ we are choosing a particular pre-image of
$\eta \in \mathfrak g^*$.

The main result of the present paper is that there exists $\xi \in
\mathfrak m$ such that $\eta+\xi \in \mathscr N(\mathfrak
g)_{reg}$.  This can be expressed as saying that there is some
pre-image of $\eta \in \mathfrak g^*$ lying in $\mathscr
N(\mathfrak g)_{reg}$.  We do not believe that this to be a priori
obvious. However one may remark that since index $\mathfrak p =
1$, it follows that $P\eta +\mathfrak m$ has codimension $1$ in
$\mathfrak g^*$.  On the other hand since $\mathfrak g$ is
semisimple the codimension of $\mathfrak g^* \setminus \mathfrak
g^*_{reg}$ in $\mathfrak g^*$ is $3$ (as is well-known - see \cite
[2.6.14]{J6}, for example).  It follows that $(P\eta + \mathfrak
m) \cap \mathfrak g^*_{reg} \neq \phi$, however this is not quite
what we require.

\subsection{}\label{1.9}

Recall the notation of \ref {1.5}. Set $x = \sum_{\alpha \in
\pi}x_\alpha$. Set $\mathfrak n' =[\mathfrak n,\mathfrak n]$.  We
need the following well-known technical result.  We give a proof
for completeness.

\begin {lemma}    $Nx= x+\mathfrak n'$.
\end {lemma}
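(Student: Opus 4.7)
I would split the statement into two inclusions and handle the nontrivial one by a dimension count.

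For $Nx\subseteq x+\mathfrak n'$, I would first check $[\mathfrak n,x]\subseteq\mathfrak n'$: for any $\gamma\in\Delta^+$ and any simple root $\alpha\in\pi$, the bracket $[x_\gamma,x_\alpha]$ is either zero or a non-zero scalar multiple of $x_{\gamma+\alpha}$, where $\gamma+\alpha$ is a positive root of height at least two, hence lies in $\mathfrak n'$. Since $\mathfrak n'$ is an ideal of $\mathfrak n$, iteration gives $(\ad y)^k(x)\in\mathfrak n'$ for every $y\in\mathfrak n$ and every $k\geq 1$. Expanding
$$\exp(\ad y)(x)=x+\sum_{k\geq 1}\tfrac{1}{k!}(\ad y)^k(x)$$
and using that $N=\exp\mathfrak n$ acts by the adjoint action (everything being polynomial on the nilpotent $\mathfrak n$), one obtains the desired containment.

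For the reverse inclusion I would count dimensions. Since $x$ is a regular (principal) nilpotent element, $\dim\mathfrak g^x=\rank\mathfrak g=|\pi|$. Moreover $\mathfrak g^x\subseteq\mathfrak n$: this follows from the Jacobson--Morozov theorem applied to the principal $\mathfrak{sl}_2$-triple $(x,h,y)$ with $h\in\mathfrak h$, for $\ad h$ acts with strictly positive eigenvalues on $\mathfrak g^x$ (these being twice the exponents), so $\mathfrak g^x$ sits in $\mathfrak n$; in type $A$ one may alternatively note directly that $\mathfrak g^x$ is spanned by $x,x^2,\ldots,x^{n-1}$, all strictly upper triangular. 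Consequently the Lie algebra of the stabilizer $N_x:=N\cap Z_G(x)$ is $\mathfrak g^x$ itself, of dimension $|\pi|$, so the orbit map yields $\dim Nx=\dim N-|\pi|=|\Delta^+|-|\pi|=\dim\mathfrak n'$.

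To conclude, I would invoke the Kostant--Rosenlicht theorem: the orbit of a unipotent algebraic group acting on an affine variety is closed. Thus $Nx$ is closed in $\mathfrak g$, hence closed in the irreducible affine space $x+\mathfrak n'$ by the first paragraph. Being the image of the connected variety $N$, the orbit $Nx$ is also irreducible, so it is a closed irreducible subvariety of $x+\mathfrak n'$ of the same dimension, and equality follows. The only mildly delicate point in the whole argument is the containment $\mathfrak g^x\subseteq\mathfrak n$; the remainder is a routine combination of an elementary bracket computation, a dimension count, and closedness of a unipotent orbit.
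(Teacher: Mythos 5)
Your argument is correct, but it is not the route the paper's proof actually takes. The paper proves the nontrivial inclusion $x+\mathfrak n'\subseteq Nx$ by a completely elementary descending induction on root height: with $\mathfrak n^k$ the span of root vectors of height $\geq k$ and $N^k$ the corresponding subgroup, one shows $x+\mathfrak n^{k+1}\subseteq N^kx$ and descends using the relation $\mathfrak n_k=[\mathfrak n_1,\mathfrak n_{k-1}]$; no closedness theorem, no regularity of $x$, and no $\mathfrak{sl}_2$-theory are needed. What you do instead --- the dimension count $\dim Nx=\dim N-\rank\mathfrak g=\dim\mathfrak n'$ via $\mathfrak g^x\subseteq\mathfrak n$ (principal triple or the type-$A$ description $\mathfrak g^x=\spn\{x,\ldots,x^{n-1}\}$), combined with the Kostant--Rosenlicht closedness of unipotent orbits and irreducibility of $x+\mathfrak n'$ --- is sound, and it is essentially the synthesis of the two alternative arguments the paper itself sketches in its Remarks: Remark 1 invokes Rosenlicht's closedness plus density, and Remark 2 gives the dimension/regularity count (there used to get openness rather than combined with closedness). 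Your version trades the paper's self-contained induction for shorter but heavier inputs (Jacobson--Morozov and the closed-orbit theorem); one small bonus of your formulation is that the dimension count is phrased for the principal nilpotent in any semisimple $\mathfrak g$, whereas the paper's Remark 2 restricts its regularity argument to $\mathfrak{sl}(n)$. The one point you flag as delicate, $\mathfrak g^x\subseteq\mathfrak n$, is indeed the only nontrivial input and your justification of it (strictly positive $\ad h$-eigenvalues, the exponents being $\geq 1$ for $\mathfrak g$ simple) is correct.
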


\begin {proof}  The inclusion $Nx\subset  x+\mathfrak n'$, is trivial.

The converse will be proved by an easy induction. For all $\beta
\in \Delta^+$, we may write $\beta = \sum_{\alpha \in \pi}
k_\alpha \alpha$ and we set $o(\beta) = \sum_{\alpha \in \pi}
k_\alpha$. Let $N_\beta$ be the closed subgroup of $N$ with Lie
algebra $\mathbb Kx_\beta$. For all $k \in \mathbb N^+$, set
$$N^k:= \prod_{\beta \in \Delta^+|o(\beta) \geq k}N_\beta. \eqno
{(*)}$$

Clearly $N^k$ is a closed subgroup of $N$ with Lie algebra
$$\mathfrak n^k:= \sum_{\beta \in \Delta^+|o(\beta) \geq k}
\mathbb Kx_\beta.$$

Set
$$\mathfrak n_k:= \sum_{\beta \in \Delta^+|o(\beta) = k}
\mathbb Kx_\beta.$$

Then $$\mathfrak n^k = \sum_{\ell \geq k} \mathfrak n_\ell.$$

Suppose we have shown that $x+\mathfrak n^{k+1} \subset N^kx$,
which is of course trivial for $k$ sufficiently large.  If $k=1$,
we are done. Otherwise use of the well-known relation $\mathfrak
n_k =[\mathfrak n_1,\mathfrak n_{k-1}]$, together with the
induction hypothesis gives the assertion for $k$ replaced by
$k-1$.

\end {proof}

\textbf{Remark 1}.  It is clear that $Nx$ is dense in $x+\mathfrak
n'$.  On the other hand $N$ is unipotent group acting linearly on
its Lie algebra $\mathfrak n$.  Thus $Nx$ is closed in $\mathfrak
n$ by a result of Rosenlicht \cite [Thm. 2]{R} and from this the
lemma follows. Actually Rosenlicht attributes (without reference)
the required version of the result to Kostant the latter having
given a ``complicated Lie algebra argument", of which we believe
the above is an extract (see \cite [Thm. 3.6]{K1}).

\

\textbf{Remark 2}.  The result is even easier for $\mathfrak
{sl}(n)$ since closure is not needed. Take $x' \in x +\mathfrak
n'$ and let $V$ be the standard $\mathfrak {sl}(n)$ module of
dimension $n$. Choose a basis in $V$ so that $x$ has Jordan block
form. From this one immediately verifies that
$V,x'V,x^{\prime2}V,\ldots,$ is a complete flag and so there
exists a basis for $V$ such that $x'$ has Jordan block form. Thus
$x'$ (as well as $x$) is regular. Consequently $\dim Nx' =\dim N
-\dim C_N(x') \geq \dim N -\dim C_G(x')= |\Delta| - \rank
\mathfrak g = \dim \mathfrak n'$. Thus $Nx'$ must be dense in the
irreducible variety $x+\mathfrak n'$ and hence open. As a special
case, $Nx$ is open dense in $x+\mathfrak n'$. Consequently these
orbits must meet and so $x' \in  Nx$.

\subsection{}\label{1.10}

Let $\mathscr P$ denote the set of pairs of coprime positive
integers and $\mathscr S$ the set of all finite ordered sequences
of ones and minus ones.  Our construction gives a map (possibly
surjective) of $\mathscr P$ into $\mathscr S$, through the
signature of a meander (see \ref {2.1} and \ref {3.3}).  We
believe this to be quite new though whether it has any
arithmetical significance is another matter.  It would be
interesting to determine the image and fibres of this map.

\subsection{}\label{1.11}

V. Popov has informed us of work of particularly the Russian
school on algebraic and rational slices.  Although this has
practically no intersection with our present paper (being
concerned mainly with case where $\mathfrak g$ is a reductive
group acting on a finite dimensional module $V$) it is
nevertheless appropriate to give a sketch of their results of
which \cite {P} provides in particular a useful summary.

Adopting the terminology of \cite {P} we call a linear action of a
Lie algebra $\mathfrak a$ on a finite dimensional vector space
co-regular if the algebra of invariant regular functions on $V$ is
polynomial.

What we call an algebraic slice in \cite [7.6]{J9}, the Russian
school had called a Weierstrass section. (This terminology comes
for the case $\mathfrak g = \mathfrak {sl}(3)$ acting on a simple
ten dimensional module for which such a section was exhibited by
Weierstrass.) The existence of a Weierstrass section (trivially)
forces the action to be co-regular.

A fairly comprehensive study of Weierstrass sections was given in
\cite [Sect 2]{P} for a co-regular action of a semisimple Lie
algebra $\mathfrak g$ acting on a finite dimensional vector space
$V$ particularly if either $\mathfrak g$ or $V$ is simple. A
notable general result \cite [Thm. 2.2.15]{P} is that a
Weierstrass section exists if the zero fibre $\mathscr
N_V(\mathfrak g)$ of the categorical quotient map admits a regular
element (in a sense analogous \ref {1.1}). Moreover the converse
holds if the set of non-regular elements in $V$ is of codimension
$\geq 2$.

The above result and the theorems leading up to it were partly
inspired by the seminal work of Kostant \cite {K1,K2} as was our
own work. Though these are stated for just $\mathfrak g$
semisimple it is not improbable that they extend to the general
case as we already partly verified in \cite {JS}. Moreover it is
interesting to note that in all our examples (with $\mathfrak g$
solvable) where we found \cite {J9} a Weierstrass section for
which the base point was not regular, the set of non-regular
elements (in $\mathfrak g^*$) was indeed of codimension 1.

Apart from these general considerations, when in comes to actually
finding a Weierstrass section the results reported in \cite {P}
and own own work \cite {J7,J8,J9} are of a quite different nature
not least because they are mainly obtained on a case by case basis
and whilst \cite [Sect 2]{P} concentrates on the semisimple case,
our own work concerns the non-reductive case. Indeed it is not
easy to find coregular actions, rather difficult to find regular
elements in the zero fibre and even harder to exhibit Weierstrass
sections if no such elements exist.  Just to exemplify the last of
these, Popov \cite [2.2.16]{P} notes that for the action
$\mathfrak {sl}(n)$ on $n$ copies of its defining $n$ dimensional
module, the invariant algebra is generated by the (obvious)
determinant and as a consequence the nilfibre has no regular
elements, whilst a Weierstrass section obtains by sending all
off-diagonal entries to zero and all diagonal entries besides the
first to zero.  On the other hand our examples \cite {J9} come
from truncated Borels of simple Lie algebras outside types
$A,C,B_{2n},F_4$ and this for the adjoint action.  In these cases
they are many generators and a Weierstrass section is not so easy
to describe.  Classifying Weierstrass sections for co-regular
actions of non-reductive groups is a wide open problem.

\textit{Acknowledgement}.  The authors would like to thank Anna
Melnikov for Latex instruction and Vladimir Popov for his comments
on some points in the manuscript.

\

A preliminary version of this result was presented by Florence
Fauquant-Millet at the Workshop "Problems and Progress in Lie
Algebraic Theory " held on 7-8 July 2010 in the Weizmann
Institute.

\section{The Combinatorial Construction}

\subsection{}\label{2.1}

Throughout $\mathfrak g$ denotes the simple Lie algebra $\mathfrak
{sl}(n)$, with $n>2$. Let $\mathfrak h$ denote the diagonal
matrices in $\mathfrak g$.  It is a Cartan subalgebra. Let $(\ ,\
)$ denote the Cartan scalar product on $\mathfrak h$.

Set $I=\{1,2,\ldots,n-1\}$, $\hat{I}=I\cup \{n\}$.  Following
Bourbaki \cite [Planche I]{B}, we choose an orthonormal basis
$\varepsilon_i:i \in \hat{I}$ in $\mathbb R^n$ and set
$\alpha_i=\varepsilon_i-\varepsilon_{i+1}:\forall i \in I$. Then
$\pi=\{\alpha_i\}_{i \in I}$ is a simple root system for
$\mathfrak g$.

Let $p,q$ be positive integers with sum $n$. We assume that $p
\leq q$.   Following a suggestion (see \cite [2.6, Remark]{J5} of
G. Binyamini we use the Dergachev-Kirillov meanders on the set
$\{\varepsilon_1,\varepsilon_2,\ldots,\varepsilon_n\}$ to describe
the support of the second element $\eta$ of the adapted pair
$(h,\eta)$ defined in \ref {1.8}. This is instead of using the
action of the group $<i,j>$ defined in \cite {FJ1} (see \cite
[2.2]{J5}) on the set $\pi:=\{\alpha_1,\alpha_2,\ldots,
\alpha_{n-1}\}$ of simple roots. Here a meander is interpreted as
an orbit of the group $\Gamma$ generated by involutions $\sigma,
\tau$ defined as follows.  For all $i=1,2,\ldots,n$, set
$\sigma(i)=n+1-i$ and
$$\tau(i)=\left\{\begin{array}{ll}p+1-i& :\ 1 \leq i \leq p ,\\
n+p+1-i & :\ p+1\leq i \leq n.\\
\end{array}\right.$$

One checks that $\tau\sigma(k)=p+k$, where it is understood that
any integer is reduced modulo $n$ so that it lies in $[1,n]$.  It
follows that $\{1,2,\ldots,n\}$ is a single $\Gamma$ orbit
$\mathscr O$ if and only if $p,q$ are coprime.

\subsection{}\label{2.2}

Assume from now on that $p,q$ are coprime.

By a slight abuse of language we say that an end point of
$\mathscr O$ is an element of $\mathscr O$ fixed by either
$\sigma$ or $\tau$. One easily checks that $\mathscr O$ has
exactly two end points $a,b$.

If $p$ is odd, we can set $a=(p+1)/2$. If in addition $n$ (resp.
$q$) is odd we can set $b=(n+1)/2$ (resp. $b=p+(q+1)/2$). In this
case $a <b$. If $p$ is even we can set $b=(n+1)/2$ and
$a=p+(q+1)/2$. In this case $b <a$.  We call $a$ (resp. $b$) the
starting (resp. finishing) point of $\mathscr O$.

We define a bijection $\varphi :\hat{I}\iso \hat{I}$ as follows.
First note that the starting point $a$ is always a $\tau$ fixed
point. Then set $\varphi(1)=a, \varphi(2)=\sigma(a),
\varphi(3)=\tau\sigma(a), \ldots$.  (This may be a little
confusing since the domain which identifies with
$\{1,2,\ldots,n\}$ and the target which identifies with $\mathscr
O$ are both denoted by $\hat{I}$.)

Set $\beta_i=\varepsilon_{\varphi(i)}-\varepsilon_{\varphi(i+1)}:i
\in I$. By our conventions $\beta_1$ is a positive (resp.
negative) root if $p$ odd (resp. even). Again
$(\beta_i,\beta_{i+1}) <0$, for all $i \in I\setminus \{n-1\}$,
whilst the remaining scalar products between distinct elements,
vanish. Hence $\Pi:=\{\beta_i\}_{i\in I}$ is a simple root system
and in particular $W$ conjugate to $\pi$.

Recall \ref {1.8}. One easily checks that \textit{up to signs}
there is a unique subset of $\Pi$ which is the Kostant cascade
$\mathscr B$ for $\mathfrak {sl}(n)$ and again \textit{up to
signs} $\Pi \smallsetminus \mathscr B$ is the opposed Kostant
cascade $\mathscr B'$ for the Levi factor $\mathfrak {sl}(p)\times
\mathfrak {sl}(q)$ of $\mathfrak p$. In particular relative to
$\pi$ the elements of $\mathscr B$ (resp. $\mathscr B'$) are
positive (resp. negative roots).  It is the analysis of these
signs which is the main combinatorial content behind the
construction of a further simple root system $\Pi^*$. This is the
main step in achieving our goal of finding a regular nilpotent
element $y$ of $\mathfrak g$, whose restriction to $\mathfrak p$
is $\eta$.

\subsection{}\label{2.3}

Towards the above goal we define a turning point of $\mathscr O$
to be an element $\varphi(t):t \in \hat{I}$ such that
$t-\sigma(t)$ is of opposite sign to $t-\tau(t)$.  Here we include
the end points of $\mathscr O$ in its set of turning points, that
is to say when one of the above integers is zero. The remaining
turning points are called internal turning points.

The observation in \ref {2.2} can be expressed as saying that for
all $i \in I$ one has either $\beta_i \in \mathscr B \cup \mathscr
B'$ or $\beta_i \in -(\mathscr B \cup \mathscr B')$. Notice
further that \textit{up to signs} if $\beta_{i-1} \in \mathscr B$,
then its successor $\beta_i \in \mathscr B'$ and vice-versa. Let
us now make precise how these signs vary. Indeed taking account of
the fact that the elements of $\mathscr B$ (resp. $\mathscr B'$)
are positive (resp. negative) roots, the following fact is easily
verified.

\begin {lemma}  Suppose $\beta_{t-1} \in \mathscr B \cup \mathscr
B'$ (resp. $\beta_{t-1} \in -(\mathscr B \cup \mathscr B')$), then
$\beta_t \in -(\mathscr B \cup \mathscr B')$ (resp. $\beta_t \in
\mathscr B \cup \mathscr B'$) if and only if $\varphi(t)$ is an
internal turning point of $\mathscr O$.
\end {lemma}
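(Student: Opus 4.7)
The plan is to reduce the lemma to a direct sign computation and then observe that the resulting condition is precisely the definition of an internal turning point.

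First I would unwind $\varphi$. From $\varphi(1)=a$, $\varphi(2)=\sigma(a)$, $\varphi(3)=\tau\sigma(a),\ldots$ one sees that $\varphi(t+1)=\sigma(\varphi(t))$ when $t$ is odd and $\varphi(t+1)=\tau(\varphi(t))$ when $t$ is even. Writing $s:=\varphi(t)$, this gives $\beta_t=\varepsilon_s-\varepsilon_{\sigma(s)}$ for $t$ odd and $\beta_t=\varepsilon_s-\varepsilon_{\tau(s)}$ for $t$ even, while $\beta_{t-1}=\varepsilon_{\tau(s)}-\varepsilon_s$ or $\varepsilon_{\sigma(s)}-\varepsilon_s$ respectively. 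Thus the $\mathscr B$-type or $\mathscr B'$-type nature of $\beta_{t-1}$ and $\beta_t$ alternates with the parity of $t$, as already observed before the lemma.

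Next I would translate cascade membership into sign conditions. Recall that $\mathscr B=\{\varepsilon_j-\varepsilon_{n+1-j}:1\leq j\leq\lfloor n/2\rfloor\}$ consists of positive roots, whereas $-\mathscr B'$ (and not $\mathscr B'$) is the Kostant cascade of the Levi $\mathfrak{sl}(p)\times\mathfrak{sl}(q)$ and consists of positive Levi roots $\varepsilon_j-\varepsilon_{\tau(j)}$ with $j<\tau(j)$. Consequently
$$\varepsilon_s-\varepsilon_{\sigma(s)}\in\mathscr B\iff s-\sigma(s)<0,\qquad \varepsilon_s-\varepsilon_{\tau(s)}\in-\mathscr B'\iff s-\tau(s)<0,$$
and reversing the root in either case flips the corresponding inequality.

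Finally I would combine the two preceding steps. In either parity of $t$, the requirement $\beta_{t-1}\in\mathscr B\cup\mathscr B'$ imposes a definite sign on one of the quantities $s-\sigma(s), s-\tau(s)$, while $\beta_t\in-(\mathscr B\cup\mathscr B')$ imposes a sign on the other; a direct inspection of the four sub-cases (parity of $t$ combined with the main or the ``resp.'' alternative) shows that both conditions hold simultaneously exactly when $s-\sigma(s)$ and $s-\tau(s)$ have opposite signs, which is precisely the definition of $\varphi(t)=s$ being an internal turning point. The ``resp.'' half of the statement follows by reversing all signs. The only obstacle is careful bookkeeping of signs and of which cascade each $\beta_i$ belongs to—particularly keeping in mind that positive Levi roots lie in $-\mathscr B'$ rather than $\mathscr B'$; no genuine computation is required beyond this sign-tracking.
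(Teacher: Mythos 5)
Your proof is correct, and it is essentially the verification the paper has in mind: the paper dismisses the lemma as "easily verified" from the positivity of $\mathscr B$ and negativity of $\mathscr B'$, and your sign-tracking via $s-\sigma(s)$ and $s-\tau(s)$ (with the parity of $t$ deciding which of $\sigma,\tau$ is involved) is precisely that verification, matching the definition of an internal turning point. No gaps; the only implicit point, that both differences are automatically nonzero since $\varphi(t)$ with $2\leq t\leq n-1$ is not an end point, is forced anyway by the strict inequalities your membership conditions impose.
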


\textbf{Remark}.  By our conventions if $p$ is odd, then $\beta_1
\in \mathscr B$ and if $p$ is even, then $\beta_1 \in -\mathscr
B$.

\subsection{}\label{2.4}

It is easy to compute the set of turning points of $\mathscr O$.
They form two disjoint ``connected" sets, namely $A:=[[p/2]+1,p]$
and $B:=[[n/2]+1,p+[(q+1)/2]]$. If $p$ is odd both have
cardinality $(p+1)/2$ and moreover $a \in A$ and $b \in B$. If $p$
is even, then $A$ has cardinality $p/2$, whilst $B$ has
cardinality $1+p/2$ and contains both $a$ and $b$.

The $\Gamma$ orbit $\mathscr O$ viewed as starting at $a$ and
finishing at $b$ acquires a linear order with smallest element $a$
and largest element $b$ being the natural order on $\hat{I}$
translated under $\varphi$. It induces a linear order on the set
$T$ of turning points.

\begin {lemma} With respect to the above linear order on $T$ the nearest
neighbour(s) of an element of $A$ lie(s) in $B$ and vice-versa.
\end {lemma}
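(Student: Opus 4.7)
The approach is to track a horizontal direction along each edge of the meander and read off from it the type ($A$ or $B$) of each turning point.  Set $d_s:=\sgn(\varphi(s+1)-\varphi(s))$ for $1\le s\le n-1$, and for each turning point $\varphi(s)$ let $\epsilon(s):=+1$ if $\varphi(s)\in A$ and $\epsilon(s):=-1$ if $\varphi(s)\in B$.  The Lemma is then the assertion $\epsilon(t)=-\epsilon(s)$ for any two consecutive turning points $\varphi(s),\varphi(t)$ with $s<t$, which is what I aim to prove.

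The first step is the local identity $d_s=(-1)^{s+1}\epsilon(s)$ at each interior turning point.  The outgoing edge at position $s$ is a $\sigma$-arc when $s$ is odd and a $\tau$-arc when $s$ is even.  If $\varphi(s)\in A$ then $(p+1)/2<\varphi(s)\le p<(n+1)/2$, so $\sigma(\varphi(s))>\varphi(s)$ and $\tau(\varphi(s))<\varphi(s)$; if $\varphi(s)\in B$ then $(n+1)/2<\varphi(s)<(n+p+1)/2$, so $\sigma(\varphi(s))<\varphi(s)$ and $\tau(\varphi(s))>\varphi(s)$.  Reading off the sign $d_s$ of the outgoing edge in each of the four cases (type $\times$ parity of $s$) gives the formula.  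The second step is a dynamical dichotomy at an internal vertex $\varphi(r)$: both arcs point to the same side of $\varphi(r)$ precisely when $\varphi(r)$ is non-turning, whence $d_r=-d_{r-1}$; both arcs point to opposite sides precisely when $\varphi(r)$ is turning, whence $d_r=d_{r-1}$.

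Now let $\varphi(s),\varphi(t)$ with $s<t<n$ be consecutive turning points in the meander order.  Every $\varphi(r)$ with $s<r<t$ is non-turning, so iterating the dichotomy and then applying preservation at $\varphi(t)$ yields
\[
d_t=d_{t-1}=(-1)^{t-1-s}d_s.
\]
Substituting the local identity into both sides gives $(-1)^{t+1}\epsilon(t)=(-1)^{t}\epsilon(s)$, i.e.\ $\epsilon(t)=-\epsilon(s)$, as required.

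It remains to handle the endpoints $s=1$ and $t=n$.  At $s=1$ the local identity extends: for $p$ odd, $a=(p+1)/2\in A$ with $\sigma(a)>a$ gives $d_1=+1=\epsilon(1)$, while for $p$ even, $a=p+(q+1)/2\in B$ with $\sigma(a)<a$ gives $d_1=-1=\epsilon(1)$.  At $t=n$, a direct calculation using $\varphi(n-1)=\tau(b)=(n+2p+1)/2>b$ (for $n$ odd) or $\varphi(n-1)=\sigma(b)=(q+1)/2<b$ (for $n$ even) yields $d_{n-1}=(-1)^{n+1}\epsilon(n)$; combining with $d_{n-1}=(-1)^{n}\epsilon(s)$ from the iteration produces $\epsilon(n)=-\epsilon(s)$.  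The main obstacle is the first step: one must carefully verify that the geometric descriptions $A=[[p/2]+1,p]$ and $B=[[n/2]+1,p+[(q+1)/2]]$ really do pin down the signs of $\sigma(k)-k$ and $\tau(k)-k$ for $k$ in the corresponding set; once this is confirmed, the remainder of the argument reduces to a short parity computation.
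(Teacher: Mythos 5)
Your proof is correct, and it takes a genuinely different route from the paper's. You track the local direction $d_s=\sgn(\varphi(s+1)-\varphi(s))$ along the meander, note that it flips at non-turning vertices and is preserved at interior turning vertices, and that at a turning vertex the sign $d_s$ together with the parity of $s$ (which records whether the outgoing arc is a $\sigma$- or a $\tau$-arc) determines whether the vertex lies in $A$ or in $B$; alternation then falls out of a parity count, with the two end points checked by hand. The verifications you flag at the end do go through: for interior $k\in A$ one has $\tau(k)<k<\sigma(k)$, for interior $k\in B$ one has $\sigma(k)<k<\tau(k)$, and for $k\notin A\cup B$ both images lie strictly on the same side of $k$ (strictness at interior vertices is automatic because the only fixed points of $\sigma,\tau$ are the two end points of the orbit). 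The paper argues quite differently: it first compares cardinalities ($|A|\leq |B|$, with equality unless both end points lie in $B$), reducing the lemma to the claim that after leaving a turning point of $B$ the path first meets $A$; this is proved globally by observing that $\tau\sigma$ (resp. $\sigma\tau$) is translation by $p$ (resp. $-p$) while $B$ sits in a window of width at most $p/2$, so the path reaches $[1,p]=A\cup\tau(A)$ before it can return to $B$. Your argument is purely local and symmetric in $A$ and $B$, needs no counting step, and handles adjacent (isolated) turning points automatically; the paper's argument is shorter to state but leans on the global translation structure of the meander and on the cardinality comparison to get the ``vice versa'' direction.
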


\begin {proof}  Since $|A|\leq |B|$ with equality unless both
$a,b$ lie in $B$ in which case $|A|+1=|B|$, it is enough to show
that the set of successors of an element of $B$ first meets $A$.
Take $b' \in B$ and assume that $\sigma(b')$ (resp. $\tau(b')$) is
a successor of $b'$. Since $B$ lies in an interval of width $\leq
p/2$, whilst $\tau\sigma$ (resp. $\sigma\tau$) is translation by
$p$ (resp. $-p$), it follows that the set of successors of $b'$
with respect to powers of $\tau\sigma$ (resp. $\sigma\tau$) meets
the interval $[1,p]$ before it meets $B$ again.

Finally observe that the image under $\tau$ of any of the above
$p$-translates of $b'$ (that is to say after starting at $b'$ and
until $[1,p]$ is reached) do not lie in $B$.  On the other hand
when $[1,p]$ is reached then the set of successors first meets $A$
since $A\cup\tau(A)=[1,p]$. Hence the assertion of the lemma.
\end {proof}

\subsection{}\label{2.5}

We can now describe the signs mentioned in \ref {2.3}.

In view of \ref {2.4}, that there are always $p+1$ turning points
of which $p-1$ are internal.   If $p$ is odd (resp. even) we label
them as $\varphi(t_i):i=1,2,\ldots,p+1$ (resp. $i=0,1,\ldots,p$),
where the $t_i$ are strictly increasing. Set
$J:=1,2,\ldots,[(p+1)/2]$. With this choice and our previous
conventions $A=\{\varphi(t_{2j-1}):j \in J\}$.

For all $k=0,1,\ldots,p$, set $\epsilon_i=(-1)^{k-1}$, for all
$i=t_k,t_k+1,\ldots,t_{k+1}-1$.  That is $\epsilon_i=1$ (resp.
$\epsilon_i=-1$) in the interval in which an element of $A$ (resp.
$B$) is followed by an element of $B$ (resp. $A$).

\begin {cor} $\mathscr B \cup \mathscr B' =
\{\epsilon_i\beta_i:i \in I\}$. In particular the
$\epsilon_i\beta_i:i \in I$ which lie in $\mathscr B$ (resp.
$\mathscr B'$), are positive (resp. negative) roots.
\end {cor}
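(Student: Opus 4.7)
The plan is to combine Lemma 2.3 with the sign-flipping rule defining the $\epsilon_i$, so that the Corollary reduces to a straightforward bookkeeping argument. The statement has two parts: (i) the equality of sets $\mathscr{B}\cup\mathscr{B}' = \{\epsilon_i\beta_i : i\in I\}$, and (ii) the claim that those $\epsilon_i\beta_i$ lying in $\mathscr{B}$ (resp.\ $\mathscr{B}'$) are positive (resp.\ negative) roots. Once (i) is established, (ii) is immediate since by construction (see \ref{1.8} and \ref{2.2}) the elements of $\mathscr{B}$ are positive roots and those of $\mathscr{B}'$ are negative, so the sign of $\epsilon_i\beta_i$ as a root is determined by which cascade it lies in.

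To get (i), I would first check the base case $i=1$. The Remark after Lemma 2.3 says $\beta_1\in\mathscr{B}$ when $p$ is odd and $\beta_1\in -\mathscr{B}$ when $p$ is even. In the odd case, $a=\varphi(1)$ is a turning point, so $t_1=1$ and $\epsilon_1=(-1)^{1-1}=1$; in the even case $t_0=1$ and $\epsilon_1=(-1)^{0-1}=-1$. In either case $\epsilon_1\beta_1\in\mathscr{B}$. For the inductive step, define $\delta_i\in\{\pm 1\}$ by the requirement that $\delta_i\beta_i\in\mathscr{B}\cup\mathscr{B}'$ (well defined by the observation in \ref{2.2} that each $\beta_i$ lies, up to sign, in $\mathscr{B}\cup\mathscr{B}'$). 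Lemma 2.3 says precisely that $\delta_i=-\delta_{i-1}$ if and only if $\varphi(i)$ is an internal turning point. On the other hand, the definition of $\epsilon_i$ in \ref{2.5} was engineered so that $\epsilon_i=-\epsilon_{i-1}$ exactly when $i=t_k$ for some internal $k$, that is, at the same set of indices. Hence the ratio $\epsilon_i/\delta_i$ is independent of $i$, and having already matched $\epsilon_1=\delta_1$ in the base case, we conclude $\epsilon_i\beta_i\in\mathscr{B}\cup\mathscr{B}'$ for every $i\in I$.

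Finally, the set equality in (i) follows by a cardinality count. The $n-1$ elements $\epsilon_i\beta_i$, $i\in I$, are distinct (the $\beta_i$ themselves are, being a simple root system), and from \ref{2.2} one has, up to signs, the disjoint-union decomposition $\Pi = \mathscr{B}\sqcup\mathscr{B}'$, so $|\mathscr{B}|+|\mathscr{B}'|=n-1$. The inclusion $\{\epsilon_i\beta_i\}\subset\mathscr{B}\cup\mathscr{B}'$ shown above is therefore an equality, and (ii) then follows as noted. I do not foresee a genuine obstacle: essentially all of the work has been done in Lemma 2.3 and in the careful definitions of $\Pi$, $A$, $B$, $\{t_k\}$ and $\{\epsilon_i\}$; the Corollary is simply the synthesis. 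The only thing requiring a bit of care is to keep the parity cases $p$ odd and $p$ even aligned — in particular the shift of indexing $k\mapsto k-1$ when passing from one parity to the other — but this is handled uniformly by the observation that $\epsilon_i$ and $\delta_i$ flip at exactly the same integers.
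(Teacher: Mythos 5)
Your proposal is correct and takes essentially the same route the paper intends: the corollary is stated there without a separate proof precisely because it is the immediate synthesis of Lemma \ref{2.3}, the Remark fixing the sign of $\beta_1$, and the definition of the $\epsilon_i$ in \ref{2.5}, which flip exactly at the internal turning points. Your explicit induction with the auxiliary signs $\delta_i$, together with the concluding cardinality (or \ref{2.2}) argument, simply spells out that intended reasoning.
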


\subsection{}\label{2.6}

Observe that there is exactly one index $i \in I$ such that
$\beta_i \in \pm \pi$.  We call this the exceptional index $e$ and
$\beta_e$ the exceptional value.

From the corollary we see that $\mathscr B \cup \mathscr B'$
cannot be a simple root system because successive scalar products
acquire the wrong sign as the (internal) turning points are
crossed. Our aim is to change the $\epsilon_i\beta_i$ to new
elements $\beta^*_i$, so that

\smallskip

a)  $\Pi^*:=\{\beta^*_i\}_{i\in I}$, is a simple root system,

\smallskip

b) Suppose $\beta^*_i \neq \epsilon_i\beta_i$. Then expressed as a
sum of elements of $\pi$, the "non-compact" root $\alpha_p$
appears in $\beta^*_i$ with a negative coefficient (hence with
coefficient $-1$).

\smallskip

c)  $\beta^*_e \neq \epsilon_e\beta_e$.

\smallskip

d) $\epsilon_i\beta_i \in \mathbb N \Pi^*$, for all $i \in I
\setminus \{e\}$.  Equivalently the $\epsilon_i\beta_i :i \in I
\setminus \{e\}$ are positive roots with respect to $\Pi^*$.

\smallskip

The meaning of these conditions is as follows.  Set
$$y'= \sum_{i\in I} x_{\beta^*_i}.$$

Condition a) means that $y'$ is a regular nilpotent element of
$\mathfrak {sl}(n)$ and hence can be conjugated by an element $w$
of the Weyl group $W=S_n$ to a standard nilpotent element
$y_0:=\sum_{\alpha \in \pi} x_{\alpha}$.  Condition d) means that
the $x_{\epsilon_i\beta_i}$, for $i$ non-exceptional, either
already occur in $y'$ or can be added to $y'$ as commutators of
the $ x_{\beta^*_i}:i\in I$.  In particular by Lemma \ref {1.9},
the new element $y^{\prime\prime}$ obtained by adding these
commutators, namely the $\{x_{\epsilon_i\beta_i}:
\epsilon_i\beta_i \neq \beta_i^*\}_{i \in I\setminus \{e\}}$, is
again regular nilpotent. Finally b) and c) imply that
$y^{\prime\prime}$ restricted to $\mathfrak p$ coincides with
$\eta$ as defined in \ref {1.8}.  Let $B$ denote the Borel
subgroup of $G$ defined with respect to $\pi$.

Suppose conditions a)-d) are satisfied.  We conclude by the above,
Lemma \ref {1.9} and Corollary \ref {2.5} that there exists $w\in
W$ and $b \in B$ such that the restriction of $y:=n_wby_0$ is
$\eta$. Moreover we shall give a (fairly) explicit expression for
$\Pi^*$ and this determines $w$.

Since $P$ contains the opposed Borel subgroup $B^-$ rather than
$B$ one should consider $y_0$ defined above as the negative
element of a principal s-triple.  In the present work this has no
particular significance.

The construction of $\Pi^*$ and the proof that it satisfies a)-d)
above is given in the next section.  The proof is illustrated by
Figures 1-7.  It should also be possible for the reader to
reconstruct the analysis from just these figures.

\textbf{Remarks}.  One could imagine that a simpler way to satisfy
these conditions might be possible by the following approach.
Recall that the $\epsilon_i\beta_i : i \in I$ form a basis for
$\mathfrak h^*$.  Thus we can choose $c_i \in \mathbb N^+:i \in I$
such that there is a unique element $h \in \mathfrak h$ satisfying
$h(\epsilon_i\beta_i)=c_i, \forall i \in I$, and that this element
is regular. Then $\Delta^{*+}:=\{\alpha \in \Delta| h(\alpha)>0\}$
is a choice of positive roots for $\Delta$ and so defines a set
$\Pi^*$ of simple roots in which d) will be satisfied by
construction and even in the overly strong form that
$\epsilon_i\beta_i \in \mathbb N \Pi^*$, for all $i \in I$. It is
not so obvious if and how we can choose the $c_i:i \in I$, to
ensure that b) is satisfied. Again c) will not be satisfied in
general; but in our approach we modify our solution weakening this
overly strong form of d) to recover condition c).  A postiori one
may recover a good choice of the $c_i:i \in I$ by setting
$h(\alpha)=1, \forall \alpha \in \Pi^*$.

\subsection{}\label{2.7}

Relative to $\pi$, the roots in $\mathscr B'$ have a zero
coefficient of $\alpha_p$. Thus by Corollary \ref {2.5}, all
elements of $\{\epsilon_i\beta_i\}_{i\in I}$ have a non-negative
coefficient of $\alpha_p$, which is hence in $\{0,1\}$. This
coefficient is non-zero only if $\beta_i \in \pm \mathscr B$. By
our conventions (see \ref {2.2}) $\beta_i \in \pm \mathscr B$, if
and only if $i$ is odd.  Thus $\beta_i$ has a non-zero coefficient
of $\alpha_p$ only if $i$ is odd. In particular neighbours
$\beta_i,\beta_{i+1}$ cannot both have a non-zero coefficient of
$\alpha_p$.

Fix $t \in I$. Call $t$ a nil point if the coefficient of
$\alpha_p$ in $\beta_t$ is non-zero and a boundary point if
$\varphi(t)$ or $\varphi(t+1)$ is a turning point. Call $\beta_t$
a boundary (resp. nil) value if $t$ is a boundary (resp. nil)
point. (By Corollary \ref {2.5} and our convention in \ref {1.6}
it follows that $\beta_t$ is a nil value if and only if
$x_{\epsilon_t\beta_t}$ belongs to the nilradical of $\mathfrak
p^-$.)

The unique boundary value to an end point is called an end value.

\begin {lemma}

\

(i)   Suppose $\varphi(t)$ is an internal turning point, then $t$
and $t-1$ cannot be both nil points.

(ii)   Suppose $t\in I$ is a nil boundary point with
$\varphi(t)\in B$ (resp. $\varphi (t+1)\in B$). Then $\varphi
(t+1) \in A$ (resp. $\varphi (t) \in A$).

(iii)  Suppose $\varphi(t) \in A$. Then $t-1$ or $t$ must be a nil
point, in particular $1$ must be a nil point if $p$ is odd.
\end {lemma}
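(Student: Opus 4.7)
My plan is to reduce all three parts to a single structural observation about how the involutions $\sigma$ and $\tau$ interact with the nil cut $[1,p]\mid [p+1,n]$. From its piecewise definition, $\tau$ stabilizes each of the intervals $[1,p]$ and $[p+1,n]$ separately, while $\sigma(i)=n+1-i$ sends $[1,p]$ into $[q+1,n]\subseteq[p+1,n]$ and sends $[p+1,n]$ into $[1,q]$. Consequently, for any $i\in\{1,\ldots,n\}$, among the two orbit-neighbours $\sigma(i),\tau(i)$ at most one lies on the opposite side of the cut from $i$, and when such a neighbour exists it must be $\sigma(i)$. Since the orbit construction forces $\{\varphi(t-1),\varphi(t+1)\}=\{\sigma\varphi(t),\tau\varphi(t)\}$ as an unordered pair, and since $t$ is a nil point exactly when $\varphi(t)$ and $\varphi(t+1)$ straddle the cut, this observation should drive everything.

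For part (i), I apply the observation at $i=\varphi(t)$: at most one of $\sigma\varphi(t),\tau\varphi(t)$ sits on the side opposite $\varphi(t)$, so at most one of $\beta_{t-1},\beta_t$ is nil. For part (ii) with $\varphi(t)\in B$, the nil hypothesis forces $\varphi(t+1)\le p$; since $\tau\varphi(t)\ge p+1$ stays on $B$'s side, the crossing neighbour must be $\varphi(t+1)=\sigma\varphi(t)$. I then plan to combine the explicit range $B=[[n/2]+1,\,p+[(q+1)/2]]$ with $q\ge p$ to obtain the sharper bound $\sigma\varphi(t)\ge [q/2]+1\ge [p/2]+1$, placing $\varphi(t+1)$ into $A=[[p/2]+1,p]$. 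The mirror subcase $\varphi(t+1)\in B$ is identical after interchanging the roles of $t$ and $t+1$. For part (iii), if $\varphi(t)\in A$ then $\varphi(t)\le p$ forces $\sigma\varphi(t)\ge q+1\ge p+1$, so the $\sigma$-neighbour of $\varphi(t)$ automatically yields a nil $\beta$, which equals either $\beta_{t-1}$ or $\beta_t$ depending on the parity of $t$.

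The one non-routine point is the endpoint case of (iii). I expect to argue that $\varphi(t)\in A$ forces $\varphi(t)$ to be either an internal turning point or the starting point $a$, since $b$ always lies in $B$ (as is read off the explicit formulas for $a,b$ in \ref{2.2}) and $a$ lies in $A$ only when $p$ is odd, in which case $a=(p+1)/2=[p/2]+1$. In the starting-point case $t=1$ one has $\beta_{t-1}$ undefined, and the identical estimate $\sigma(a)\ge q+1\ge p+1$ shows $\beta_1$ is nil, which is the ``in particular'' assertion. I expect the main obstacle, and the only step not immediate from the opening observation, to be verifying in (ii) that $\sigma$ sends $B$ deep enough into $[1,p]$ to land in $A$ rather than merely in $[1,p]\setminus A$; this is the unique place where the refined width of $B$ and the hypothesis $q\ge p$ are genuinely used, and handles all parities of $p$ and $q$ uniformly.
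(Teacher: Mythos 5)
Your proof is correct and follows essentially the same route as the paper: there too a nil $\beta_t$ is seen to force $\sigma(\varphi(t))=\varphi(t+1)$, and the same interval estimates $\sigma(i)=n+1-i\ge [q/2]+1\ge [p/2]+1$ (for (ii)) and $\sigma(i)\ge p+1$ (for (iii)) are run, with the endpoint case giving the ``in particular'' clause. The only cosmetic difference is in (i), where the paper cites its parity remark that $\beta_i\in\pm\mathscr B$ only for $i$ odd, whereas you argue directly from $\tau$ stabilizing $[1,p]$ and $[p+1,n]$ — the same underlying fact in a slightly more self-contained form.
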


\begin {proof}

(i) follows from the remarks in the first paragraph above.

(ii) Since $t$ is a nil point, we must have
$\sigma(\varphi(t))=\varphi(t+1)$.  Now suppose $i:=\varphi(t)\in
B$.  Then $i\geq [n/2]+1$.  Thus $t$ is a nil point if and only if
$\sigma(i) \leq p$.  Again $i \leq p +[(q+1)/2]$ so $\sigma(i)
=n+1-i \geq [q/2]+1 \geq [p/2]+1$. Consequently $\varphi(t+1)
=\sigma(i)\in A$, as required. The proof of the second case is
exactly the same.

(iii) Since $i:=\varphi(t) \in A$, we have $i\leq p$ and so
$\sigma(i) \geq p+1$.  Thus either $\beta_t$ or $\beta_{t-1}$ must
have a non-zero coefficient of $\alpha_p$. Hence (iii).
\end {proof}

\textbf{Remarks}.  Since
$\beta_i=\varepsilon_{\varphi(i)}-\varepsilon_{\varphi (i+1)}$ we
may regard $\beta_i$ as lying between the elements
$\varphi(i),\varphi(i+1)$ of $\varphi(\hat{I})$. We say that
$\beta_{i-1},\beta_i$ are the neighbours of $\varphi(i):i \in
\hat{I}$ in $\pm(\mathscr B \cup \mathscr B')$ and that
$\varphi(i),\varphi(i+1):i \in I$ are the neighbours of $\beta_i
\in \pm(\mathscr B \cup \mathscr B')$. Then (iii) of the lemma can
be expressed as saying that every element of $A$ has exactly one
nil boundary value neighbour, whereas (ii) of the lemma can be
expressed as saying that if a nil boundary value has an element of
$B$ as a neighbour, then it is sandwiched between an element of
$A$ and an element of $B$.  By (ii) and (iii) of the lemma every
nil boundary value has a unique element of $A$ as a neighbour.
Finally an end value is non-nil only if it is the (unique)
neighbour of an element of $B$. For example if $p=2,q=5$, both
end-points are non-nil.  However an end value can be nil even if
it is a neighbour of an element of $B$.  For example if $p=2,q=3$,
the starting value is nil.

\subsection{Intervals}\label{2.8}

Let $\varphi(s),\varphi(t) \in T$ be turning points with $s<t$.
The subset $I_{s,t}:=\{s,s+1,\ldots,t-1\}$ is called an interval.
If $\varphi(t)$ is the immediate successor to $\varphi(s)$ in $T$,
it is called a simple interval.  Otherwise it is called a compound
interval.

The sum
$$\iota_{s,t}:=\sum_{i\in I_{s,t}}\beta_i,$$
is called a simple (resp. compound) interval value if $I_{s,t}$ is
simple (resp. compound).  The set $\{\beta_i: i \in I_{s,t}\}$ is
called the support of $\iota_{s,t}$ or of $I_{s,t}$. % We set
%$\iota_{t,s}:=\iota_{s,t}$ and $I_{t,s}:=I_{s,t}$.

\begin {lemma} Let $I_{r,s}$ be a simple interval.
There is exactly one $i \in I_{r,s}$ such that $\beta_i$ has a
non-zero coefficient of $\alpha_p$.

\begin {proof}  Observe that
$$\iota_{r,s}=\varepsilon_{\varphi(r)}-\varepsilon_{\varphi(s)}.\eqno{(*)}$$
Since $\varphi(r)\in A$ and $\varphi(s )\in B$ or vice-versa, it
follows from \ref {2.4} that the coefficient of $\alpha_p$ in the
above sum equals one or minus one. Moreover there can be no
cancellations of coefficients of $\alpha_p$ in the sum because
only alternate terms can have a non-zero coefficient and these are
either all positive roots or all negative roots since the indices
lie between successive turning points.  Hence the assertion.
\end {proof}
\end {lemma}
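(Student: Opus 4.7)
The plan is to exploit the telescoping nature of the sum $\iota_{r,s}$. Since $\beta_i=\varepsilon_{\varphi(i)}-\varepsilon_{\varphi(i+1)}$, summing over $i\in I_{r,s}$ collapses to
$$\iota_{r,s}=\varepsilon_{\varphi(r)}-\varepsilon_{\varphi(s)}.$$
The coefficient of $\alpha_p$ in $\varepsilon_j-\varepsilon_k$ is nonzero precisely when $j$ and $k$ lie on opposite sides of the cut between positions $p$ and $p+1$, in which case it is $\pm 1$.

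Next I would invoke Lemma \ref{2.4}. Because $I_{r,s}$ is simple, $\varphi(r)$ and $\varphi(s)$ are nearest neighbours in the set of turning points $T$, so one of them lies in $A$ and the other in $B$. From the explicit descriptions $A\subseteq\{1,\ldots,p\}$ and $B\subseteq\{p+1,\ldots,n\}$ recalled in \ref{2.4}, the two indices sit on opposite sides of $p$. Hence the coefficient of $\alpha_p$ in $\iota_{r,s}$ is exactly $\pm 1$.

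Finally, I would deduce the counting statement by a sign argument. By \ref{2.7}, the coefficient of $\alpha_p$ in each individual $\beta_i$ lies in $\{-1,0,+1\}$ and is nonzero only when $i$ is odd (i.e.\ $\beta_i\in\pm\mathscr B$), so neighbours never both contribute. Moreover the sign $\epsilon_i$ is constant across $I_{r,s}$, since by \ref{2.5} the $\epsilon_i$ change only at internal turning points and a simple interval by definition has none. Consequently all the nonzero contributions to the $\alpha_p$-coefficient of $\iota_{r,s}$ carry the same sign, and a sum of such terms equal to $\pm 1$ must have exactly one nonzero summand. I do not foresee a real obstacle: once the telescoping identity is written down, the combinatorial content of Lemma \ref{2.4} does all the work, and the sign bookkeeping from \ref{2.5} and \ref{2.7} rules out cancellation.
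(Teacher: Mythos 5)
Your proposal is correct and takes essentially the same route as the paper: the telescoping identity $\iota_{r,s}=\varepsilon_{\varphi(r)}-\varepsilon_{\varphi(s)}$ together with the description of $A$ and $B$ in \ref{2.4} gives coefficient $\pm 1$ of $\alpha_p$, and a same-sign argument rules out cancellation. The only cosmetic difference is that you justify the no-cancellation step via the constancy of $\epsilon_i$ on a simple interval (from \ref{2.5}), whereas the paper phrases the same fact as the contributing $\beta_i$ being all positive or all negative roots between successive turning points.
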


\subsection{The Sign of Simple Interval Values}\label{2.9}

Take $\varphi(s) \in T$ and let $\varphi(t)$ be its immediate
successive in $T$. The sign of the simple interval value
$\iota_{s,t}$ is said to be positive (resp. negative) if
$\varphi(s) \in A$ (resp. $\varphi(s) \in B$).

A positive (resp. negative) interval value is a positive (resp.
negative) root relative to $\pi$ with the coefficient of
$\alpha_p$ being $1$ (resp. $-1$), by the definition of the
$\epsilon_i$, Corollary \ref {2.5} and Lemma \ref {2.8}.

\subsection{The Exceptional Value}\label{2.10}

Recall the exceptional value $\beta_e:e \in I$ defined in \ref
{2.6}. It defines a unique simple root $\alpha \in \pi$ and one
has $\beta_e = \alpha$, \textit{up to a sign}.

\begin {lemma} The exceptional value $\beta_e$ is never a
nil value, equivalently $\alpha\neq \alpha_p$.
It is a boundary value to some unique turning point, which is
either

(i) internal,

or

(ii) an end point lying in $B$.
\end {lemma}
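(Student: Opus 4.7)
The plan is to determine $\beta_e$ explicitly by a short case analysis. Writing $\beta_e = \varepsilon_{\varphi(e)}-\varepsilon_{\varphi(e+1)}$, the condition $\beta_e \in \pm \pi$ forces $\{\varphi(e),\varphi(e+1)\}=\{j,j+1\}$ for some $j$. Since $\varphi(e+1) \in \{\sigma(\varphi(e)), \tau(\varphi(e))\}$, I would solve $\sigma(k)=k\pm 1$ and $\tau(k)=k\pm 1$ (on each piece of $\tau$'s domain). The first gives $k \in \{n/2, n/2+1\}$ and forces $n$ even; the two pieces of the second give $k \in \{p/2, p/2+1\}$ (forcing $p$ even) and $k \in \{p+q/2, p+q/2+1\}$ (forcing $q$ even). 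Since $p,q$ are coprime with $p+q=n$, exactly one of $p,q,n$ is even, so exactly one case arises and $\alpha \in \{\alpha_{p/2},\alpha_{p+q/2},\alpha_{n/2}\}$. None of these equals $\alpha_p$, since the corresponding equalities would force $p=0$, $q=0$, or $q=p$ respectively, all impossible. This proves the first claim.

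For the second claim I would check in each case which of the consecutive integers in $\{\varphi(e),\varphi(e+1)\}$ lies in the turning set $T=A\cup B$, where $A=[[p/2]+1,p]$ and $B=[[n/2]+1,p+[(q+1)/2]]$. In the $\alpha_{n/2}$ case ($p,q$ odd), $n/2$ lies strictly between $p$ and $[n/2]+1=n/2+1$, so outside $T$, whereas $n/2+1$ is the left endpoint of $B$ and lies inside it. In the $\alpha_{p/2}$ case ($p$ even), $p/2=[p/2]$ is below both $[p/2]+1$ and $[n/2]+1$, so outside $T$, whereas $p/2+1=[p/2]+1 \leq p$ lies in $A$. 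In the $\alpha_{p+q/2}$ case ($q$ even), $p+q/2=p+[(q+1)/2]$ is the right endpoint of $B$, whereas $p+q/2+1$ exceeds both $p$ and $p+[(q+1)/2]$ and so lies outside $T$. Thus in all three cases exactly one of the two consecutive values is a turning point, and that turning point is unique.

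Finally, to distinguish internal from end points, I would compare with the explicit formulas for $a,b$ from \ref{2.2}. In the $\alpha_{p/2}$ case the turning point $p/2+1$ lies in $A$; since the only end point that can lie in $A$ is $a=(p+1)/2$, which requires $p$ odd, this case gives an internal turning point. In the $\alpha_{n/2}$ and $\alpha_{p+q/2}$ cases the turning point lies in $B$, and a direct substitution shows that it coincides with $b$ precisely when $p=1$ and never with $a$. Hence the unique turning point is either internal, or (when $p=1$) an end point of $\mathscr O$ lying in $B$, as claimed. The proof is essentially bookkeeping against the explicit descriptions of $A,B,a,b$; the only real challenge is keeping the three parity cases cleanly separated.
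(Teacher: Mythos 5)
Your proof is correct and follows essentially the same route as the paper's: identify $\beta_e=\pm\alpha_{m/2}$ where $m$ is the unique even member of $\{p,2p+q,n\}$, observe $m/2\neq p$, and then compare the pair $\{m/2,m/2+1\}$ with the explicit description of the turning points in \ref{2.4}. The only difference is cosmetic: where the paper excludes the bad configurations (both neighbours being turning points, or an end point in $A$) by invoking Lemmas \ref{2.8} and \ref{2.7}(iii), you verify everything by direct interval bookkeeping against $A$, $B$, $a$, $b$, which as a bonus also recovers the Remark that case (ii) occurs precisely when $p=1$.
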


\begin {proof} Since $p,q$ are coprime, exactly one of the
integers $p,2p+q,n=p+q$, call it $m$, is even. Then $\beta_e=\pm
\alpha_{m/2}$.  For this to be a nil value we would need
$\alpha_{m/2}=\alpha_p$, that is $m/2 = p$, which is impossible
since $p < n/2$.  This proves the first assertion.

One checks from the description of the turning points in \ref
{2.4} that either $m/2$ or $1+m/2$ is a turning point. They cannot
both be turning points because then $\beta_e$ would be nil by \ref
{2.8}, contradicting the first part. On the other hand by Lemma
\ref {2.7}(iii) an end-point lying in $A$ is nil. Hence the second
assertion.
\end {proof}

\textbf{Remark}.  On may check from \ref {2.2}, that (ii) holds if
and only if $p=1$.

\subsection{Isolated values}\label{2.11}

We call $t \in I$ an isolated point if both $\varphi(t)$ and
$\varphi(t+1)$ are turning points.  By Lemma \ref {2.8} an
isolated point is necessarily nil.  If $t$ is an isolated point we
call $\beta_t$ an isolated value. %We had first thought that
%isolated values need a special treatment; but this is not so. On
%the other hand they can lead to alternative solutions to \ref
%{2.6}.

\section{The Description of $\Pi^*$}

\subsection{}\label{3.1}

Recall that we aim to construct $\Pi^*$ by changing some of the
$\epsilon_i\beta_i$.  Here it is convenient to write
$\beta^*_i=\epsilon_i\beta'_i$ and to say that a value is changed
if $\beta_i' \neq \beta_i$.  Let us describe those values that are
changed. Here we impose three general rules. The first two are

\smallskip

1)  Change only boundary values and change only those which are
non-nil.

\smallskip

2)  Change exactly one of the boundary values at each internal
turning point.

\smallskip

\textbf{Remark}.  In the initial stage (up to \ref {3.7}) end
values will not be changed.  However if an end value is
exceptional (and hence non-nil and so the unique neighbour of an
element of $B$), then it will be changed in the final stage (\ref
{3.8}).

\subsection{}\label{3.2}

To describe our third general rule we need the following
preliminary.  For the moment we ignore the exceptional value.

For all $i,j \in \hat{I}$, with $i<j$, set
$\beta_{i,j}=\varepsilon_{\varphi(i)}-\varepsilon_{\varphi(j)}$,
which we recall is a root (and positive with respect to $\Pi$). In
this notation $\beta_i=\beta_{i,i+1}$.  Again if
$\varphi(i),\varphi(j) \in T$, we have $\beta_{i,j}=\iota_{i,j}$.

Now let $\beta_i$ be a boundary value which is to be changed
(according to rules 1) and 2) - in particular $\beta_i$ is a
non-nil boundary value). Then there is an internal turning point,
say $\varphi(t_s)$ which either equals $\varphi(i+1)$ or
$\varphi(i)$. (Both possibilities cannot simultaneously arise
since otherwise by Lemma \ref {2.8}, $\beta_i$ would be a nil
boundary value.) In the first (resp. second) case we shall say
that $\beta_i$ is above (resp. below) $\varphi(t_s)$.

In the first case we replace $\beta_i$ by $\beta_i'$ defined by
adding to $\beta_i$ ``a compound interval value which is an odd
sum of simple interval values below $\varphi(t_s)$", that is to
say we set
$$\beta_i'=\beta_i+\beta_{t_s,t_r}=\beta_{t_
s-1,t_r}:r-s \in 2\mathbb N + 1.\eqno{(*)}$$

In the second case we replace $\beta_i$ by $\beta_i'$ defined by
adding to $\beta_i$ ``a compound interval value which is an odd
sum of simple interval values above $\varphi(t_s)$", that is to
say we set
$$\beta_i'=\beta_i+\beta_{t_r,t_s}=\beta_{t_r,t_s+1}:s-r \in 2\mathbb N + 1.\eqno{(**)}$$

We may summarize the above by saying that in both cases the added
interval value is on the opposite side of the turning point to the
element in question and is a sum of an odd number of simple
interval values.

Finally (recall) that we set $\beta^*_i=\epsilon_i\beta_i'$.

\begin {lemma}  Suppose $\beta_i' \neq \beta_i$.  Then $\epsilon_i\beta_i'$ is a root. Moreover expressed
as a sum of elements of $\pi$, the non-compact root $\alpha_p$ has
coefficient $-1$ in $\epsilon_i\beta_i'$.

\end {lemma}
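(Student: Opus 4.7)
My plan is to treat the two claims separately. The first, that $\epsilon_i\beta_i'$ is a root, is immediate from the construction: in both (*) and (**) of \ref{3.2}, $\beta_i'$ has the shape $\beta_{a,b} = \varepsilon_{\varphi(a)} - \varepsilon_{\varphi(b)}$ for some $a \neq b$ in $\hat{I}$ (namely $(a,b) = (t_s-1, t_r)$ or $(t_r, t_s+1)$), which is always a root of $\mathfrak{sl}(n)$ since $\varphi$ is a bijection of $\hat{I}$; multiplying by $\epsilon_i \in \{\pm 1\}$ keeps this a root.

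For the coefficient of $\alpha_p$ in $\epsilon_i\beta_i'$, I write $\beta_i' = \beta_i + (\beta_i' - \beta_i)$. Since $\beta_i$ is by hypothesis (rule 1 of \ref{3.1}) a non-nil boundary value, its coefficient of $\alpha_p$ is zero by definition (\ref{2.7}), and the computation reduces to the inserted term: this is $\iota_{t_s, t_r}$ in Case 1 (resp. $\iota_{t_r, t_s}$ in Case 2), which is a sum of $|r-s|$ consecutive simple interval values, and $|r-s|$ is odd by construction. By Lemma \ref{2.4} the turning points alternate between $A$ and $B$, so by \ref{2.9} consecutive simple interval values alternate in sign with $\alpha_p$-coefficients $\pm 1$; thus an odd-length alternating sum of them contributes $\pm 1$ to the $\alpha_p$-coefficient, the sign matching that of its first (equivalently its last, since the length is odd) summand.

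It remains to verify that in each subcase this $\pm 1$ is corrected by $\epsilon_i$ to give $-1$. In Case 1, $i = t_s - 1$ lies in $[t_{s-1}, t_s - 1]$, so by \ref{2.5} one has $\epsilon_i = +1$ iff $\varphi(t_{s-1}) \in A$, which by the alternation in Lemma \ref{2.4} is equivalent to $\varphi(t_s) \in B$; meanwhile the first summand $\iota_{t_s, t_{s+1}}$ of the added term has $\alpha_p$-coefficient $+1$ if $\varphi(t_s) \in A$ and $-1$ if $\varphi(t_s) \in B$. In both subcases on the $A/B$-status of $\varphi(t_s)$ the product $\epsilon_i \cdot (\text{coeff of } \alpha_p \text{ in } \beta_i')$ is $-1$. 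Case 2 ($i = t_s$) is analogous: $\epsilon_i = +1$ iff $\varphi(t_s) \in A$, and since $s - r$ is odd, $\varphi(t_r)$ and $\varphi(t_s)$ lie on opposite sides of the $A/B$ partition by Lemma \ref{2.4}, so the first summand $\iota_{t_r, t_{r+1}}$ of the added term has $\alpha_p$-coefficient $+1$ iff $\varphi(t_r) \in A$ iff $\varphi(t_s) \in B$, and the product is again $-1$.

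The sole obstacle is bookkeeping across the four subcases (Case 1 or 2, times $\varphi(t_s) \in A$ or $B$), but each reduces to a one-line parity count once the sign conventions of \ref{2.5} and the alternation of Lemma \ref{2.4} are organized.
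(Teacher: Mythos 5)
Your proposal is correct and follows essentially the same route as the paper's proof: since $\beta_i$ is a non-nil boundary value its $\alpha_p$-coefficient vanishes, the added compound interval value is an odd alternating sum of simple interval values whose $\alpha_p$-coefficients are $\pm 1$ by \ref{2.9}, and the sign convention for $\epsilon_i$ from \ref{2.5} turns the resulting $\pm1$ into $-1$. The only difference is organizational: the paper folds in $\epsilon_i$ first, so that the nearest added simple interval value contributes $-1$ and the successive contributions read $\{-1,1,-1,\ldots\}$, while you compute the coefficient of the added term directly and then check the four subcases; the two bookkeepings are equivalent.
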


\begin {proof} Since $\beta_i$ is a non-nil boundary value
the coefficient of $\alpha_p$ in it is zero.

By definition $\epsilon_i$ changes sign as each turning point is
crossed. Thus if $\epsilon_i =1$ (resp. $\epsilon_i =-1$). then
the nearest simple interval value added to $\beta_i$ is negative
(resp. positive). Then by the second paragraph of \ref {2.9} the
coefficients of $\alpha_p$ of the successive simple interval
values added to $\epsilon_i\beta_i$ are $\{-1,1,-1,\ldots\}$,
whereas by construction the number of such simple intervals is
odd.
\end {proof}

\subsection{Signature}\label{3.3}

To complete our description of $\Pi^*$ we must now specify which
boundary values are to be changed (which will specify $t_s$ and
how $t_r$ in equations $(*)$ and $(**)$ is determined).

The above data will be completely determined by the signature of
the orbit $\mathscr O$ defined as follows.

Recall that by the choices made in \ref {2.5} and by Lemma \ref
{2.7}, at each turning point $\varphi(t_{2j-1}):j \in J$, which we
recall lies in $A$, one has that either $\beta_{t_{2j-1}-1}$ or
$\beta_{t_{2j-1}}$ is nil (but not both). In the first case we set
sg$(j)=-1$ (to specify that the nil boundary value is above the
turning point) and in the second case we set sg$(j)=1$ (to specify
that the nil boundary value is below the turning point).

If sg$(1)=1$ (which is always the case if $p$ is odd) then there
is a unique increasing sequence $j_1,j_2, \ldots,j_r\in J$ with
$j_1=1$, such that
$$\text {sg}(i)=(-1)^{u-1}, \forall i= j_u, j_u+1, \ldots, j_{u+1}-1,
\forall u=1,2,\ldots,r-1, \quad \text {sg}(j_r)=(-1)^{r-1}.$$

If sg$(1)=-1$ (which can be the case if $p$ is even) then there is
a unique increasing sequence $j_1,j_2, \ldots,j_r\in J$ with
$j_1=1$, such that
$$\text {sg}(i)=(-1)^u, \forall i= j_u, j_u+1, \ldots, j_{u+1}-1,
\forall u=1,2,\ldots,r-1, \quad \text {sg}(j_r)=(-1)^r.$$

We say that the signature at the turning point
$\varphi(t_{2j-1})\in A$ is positive (resp. negative) if sg$(j)=1$
(resp. sg$(j)=-1$).

Finally the signature of $\mathscr O$ is defined to be the set
$\{sg(i)\}_{i=1}^{[p/2]}$.  In the notation of \ref {1.10}, it
lies in $\mathscr S$ and defines a map of the set of coprime pairs
$\mathscr P$ into $\mathscr S$.

\subsection{}\label{3.4}

We assume until the end of \ref {3.8}, that the signature at the
first turning point in $A$ is positive. This is always the case if
$p$ is odd by virtue of Lemma \ref {2.7}(iii). Then the easiest
case to describe is when there are no signature changes. This is
illustrated in Figure $1$, where the given pattern is repeated as
many times as there are turning points in $A$.

\begin{center}

\begin{picture}(200,240)(-60,-50)
\put(50,-50){\line(0,1){240}}
\multiput(73,160)(6,0){8}{\line(1,0){3}} \put(127,157){$c_1$}
\multiput(-26,150)(6,0){13}{\line(1,0){3}}
\put(-70,147){$\varphi(t_{2j-1})$} \put(55,147){$A$}
\multiput(-26,50)(6,0){13}{\line(1,0){3}}
\put(-62,47){$\varphi(t_{2j})$} \put(55,-14){$A$}
\multiput(-26,-10)(6,0){13}{\line(1,0){3}}
\put(-70,-13){$\varphi(t_{2j+1})$} \put(55,46){$B$}
\put(50,140){\circle{7}} \put(50,20){\circle{7}}
\multiput(93,40)(6,0){5}{\line(1,0){3}} \put(127,37){$c_2$}
\linethickness{0.4mm} \put(10,160){\line(1,0){60}}
\put(30,140){\line(1,0){60}} \put(30,120){\line(1,0){40}}
\put(30,100){\line(1,0){40}} \put(30,80){\line(1,0){40}}
\put(10,60){\line(1,0){60}} \put(30,40){\line(1,0){60}}
\put(30,20){\line(1,0){40}} \put(30,0){\line(1,0){40}}
\put(10,60){\line(0,1){100}} \put(30,120){\line(0,1){20}}
\put(30,80){\line(0,1){20}} \put(30,20){\line(0,1){20}}

\put(70,100){\line(0,1){20}} \put(70,60){\line(0,1){20}}
\put(70,00){\line(0,1){20}} \put(90,40){\line(0,1){100}}
\multiput(50,-10)(0,20){9}{\circle*{3}}
\multiput(70,160)(0,4){6}{\line(0,1){2}}
\multiput(30,-22)(0,4){6}{\line(0,1){2}}
\end{picture}

\end{center}
\begin{center}
{\it Figure 1.} \\
\end{center}

\textit{This shows the basic repeating pattern for positive
signature. Turning points are labelled by their type, that is $A$
or $B$, and nil values are encircled. The dots on the vertical
central line label a subset of $\hat{I}=\{1,2,\ldots,n\}$.  In the
language of \ref {3.4}, the turning point $\varphi(t_{2j-1}) \in
A$ has positive signature. In the terminology of \ref {3.6}, the
thickened lines describe the links between the elements of
$\beta^*_i:i \in I$ and define a sub-chain linking
$\beta^*_{t_{2j-1}-1}$ to $\beta^*_{t_{2j}}$. The non-nil boundary
values that are changed carry the symbol $c$ which is given the
subscript $1$ or $2$ depending on whether rule 1) or 2) of \ref
{3.1} is applied as described in \ref {3.5}.  The map $\chi$
defined in \ref {3.5} takes $\varphi(t_{2j-1})$ to
$\varphi(t_{2j})$. A mirror reflection perpendicular to the
vertical axis gives the basic repeating pattern for negative
signature.}

\bigskip

The next easiest case is when there is one signature change,
namely from positive to negative. After that there is the case
when there are two signature changes, namely from positive to
negative to positive. From then on it is simply a repetition of
the procedure for two signature changes. The first two cases can
be considered as a degeneration of the third by simply eliminating
terms. Thus it will suffice to describe the case of two signature
changes, to make precise what is meant by degeneration and to
describe the modification needed if the signature of $\mathscr O$
is initially negative which can happen if $p$ is even.

Fix a positive odd integer $u\leq r$ and set $j = j_u, k=j_{u+1},
\ell =j_{u+2}$.  The first and second cases above correspond to
$k$ not being defined and $k$ being defined but $\ell$ not being
defined.

Assume $k$ is defined.  Then by definition
$\varphi(t_{2(k-1)-1})\in A$ and admits a nil boundary value just
below, namely $\beta_t:t=t_{2(k-1)-1}$. If $\ell$ is not defined
let $\varphi(s)$ be the last turning point or (end point)
$\varphi(n)$, otherwise set $s=t_{2\ell-2}$. In both cases
$\varphi(s) \in B$.

If $t>1$, set
$$\beta'_{t-1}=\beta_{t-1} +\beta_{t,s}=\beta_{t-1,s}.\eqno{(*)}$$

Observe that $\beta_{t,s}$ is defined even when $t=1$ and is a
compound interval value $\iota_{t,s}$ which is a sum of
$2m+1:m=\ell-k$ adjacent simple interval values
$\iota_1,\iota_2,\ldots, \iota_{2m+1}$, starting at
$\iota_1=\iota_{t_{2k-3},t_{2k-2}}$.

Suppose $\ell$ is defined (and hence so is $\beta_s$).  If
$\iota_1$ is not reduced to an isolated value, set

$$\beta'_s=\beta_s +\beta_{t,s}=\beta_{t,s+1}.\eqno{(**)}$$

Otherwise set

$$\beta'_s=\beta_s +\sum_{i=3}^{2m+1}\iota_i=\beta_{t_{2k-1},s+1}.\eqno{(***)}$$

\subsection{}\label{3.5}

In the remaining cases added interval values will be simple and we
only have to specify the non-nil boundary values which are
changed. We need only describe these between the turning points
$t_{2j-1}$ and $t_{2\ell-1}$ in the notation of \ref {3.4} since
the pattern just repeats itself.  For this there is a simple
algorithm.

Notice first that if a simple interval value $\iota$ is to be
added to a non-nil boundary value $\beta_i$, then this simple
interval value is uniquely determined by $\beta_i$ itself via the
rule in \ref {3.2}.

We define a map $\chi$ from the set of internal turning points of
$A$ to the set of turning points of $B$.

At every internal turning point $\varphi(t) \in A$, (so then
$t=t_{2v-1}$ for some $v \in J$) there is exactly one non-nil
boundary value $\beta_u$ and by rule 1) of \ref {3.1}, it must be
changed, that is $\beta'_u\neq \beta_u$. By the rule described in
\ref {3.2} there is a unique turning point $\varphi(t') \in B$ so
that $\beta'_u=\beta_u+\iota_{t,t'}$, where we have defined
$\iota_{t,t'}:=\iota_{t',t}$ if $t'<t$. We set $t'=\chi(t)$ and
$\chi(\varphi(t))=\varphi(\chi(t))=\varphi(t)$. Notice that $t'>t$
(resp. $t<t'$) if the signature of $\varphi(t) \in A$ is positive
(resp. negative) and we say that $\varphi(t') \in B$ is a
subsequent (resp. previous) turning point to $\varphi(t) \in A$.

Observe further that there is a unique boundary value $\beta_w$ to
$\varphi(t') \in B$ such that the scalar product
$(\beta_u',\beta_w)$ is strictly positive. With one possible
exception (within a double signature change) described below, we
set $\beta_w'=\beta_w$, that is this particular boundary value is
left unchanged. Then if $b':=\varphi(t')$ is an internal turning
point, its second boundary value $\beta_{w'}$ (and for which
$(\beta_u',\beta_{w'})$ is strictly negative) should be changed by
rule 2) of \ref {3.1} \textit{unless it is a nil boundary value}.
Let us show that the latter can occur at most once (within a
double signature change). It results from $\iota_1$ (as defined in
\ref {3.4}) being reduced to an isolated value.  In this case we
shall compute $w'$ explicitly.

Suppose that $\beta_{w'}$ is a nil boundary value. Then by Lemma
\ref {2.7}(ii) it is an isolated value.

Suppose its second neighbour $a'\in A$ lies above $b'$, so then
$w'=t'-1,w=t'$. This means that the signature at $a'=\varphi(w')$
is positive. Then $t'= \chi(t)$ implies that $\varphi(t) \in A$ is
a subsequent turning point to $b'$ with negative signature.  By
definition of $k$ this forces $t=t_{2k-1}$. Consequently
$t'=t_{2k-2}$, and then $\beta_{w'}=\iota_1$, by definition of the
latter.  In particular $\iota_1$ is reduced to an isolated value.
Then we set $\beta'_{w'}=\beta_{w'}$ and
$\beta_w'=\beta_w+\iota_1$.

This is the only case (within a double signature change) that we
leave unchanged the unique neighbour $\beta_{w'}$ of $\varphi(t')$
for which $(\beta_u',\beta_{w'})$ is strictly negative.

A similar argument to the above shows that $a'$ cannot lie below
$b'$.  Indeed this would imply that the signature of $a'$ is
negative, whilst $t'=\chi(t)$ implies that $\varphi(t) \in A$ is a
previous turning point to $b'$ with positive signature.  However
the construction of \ref {3.4} has the property that if
$\varphi(t) \in A$ has positive signature then the immediate
subsequent turning point in $A$ to $\varphi(\chi(t))$, which is
$a'$ in the present application, has positive signature, so unlike
the previous case we obtain to a contradiction.

Figure $2$ compares the cases when $\iota_1$ is not and is reduced
to an isolated value.   From it one may see why \ref {3.4}$(**)$
has been replaced by \ref {3.4}$(***)$.

\begin{center}

\begin{picture}(500,250)(-10,-50)
\put(20,-50){\line(0,1){240}} \put(300,-50){\line(0,1){240}}
\multiput(-80,160)(5,0){20}{\line(1,0){3}}
\multiput(-80,60)(5,0){20}{\line(1,0){3}}
\multiput(-80,-20)(5,0){20}{\line(1,0){3}}
\multiput(20,170)(5,0){20}{\line(1,0){3}}
\multiput(20,130)(5,0){20}{\line(1,0){3}}
\multiput(20,-30)(5,0){20}{\line(1,0){3}}
\multiput(20,50)(5,0){20}{\line(1,0){3}}
\multiput(200,160)(5,0){20}{\line(1,0){3}}
\multiput(200,60)(5,0){20}{\line(1,0){3}}
\multiput(200,-20)(5,0){20}{\line(1,0){3}}
\multiput(300,170)(5,0){20}{\line(1,0){3}}
\multiput(300,130)(5,0){20}{\line(1,0){3}}
\multiput(300,-30)(5,0){20}{\line(1,0){3}}
\multiput(300,50)(5,0){20}{\line(1,0){3}}
\put(120,167){$c_{0,1}$}\put(120,127){$c_2$}\put(120,47){$c_1$}\put(120,-33){$c_{0,2}$}
\put(400,167){$c_{0,1}$}\put(400,127){$c_2$}\put(400,47){$c_1$}\put(400,-33){$c_{0,2}$}
%\multiput(-55,157.5)(6,0){18}{\line(1,0){3}}
%\put(-100,155){$\varphi(t_{2j-1})$}
\put(25,155){$A$} \put(25,55){$A$} \put(305,155){$A$}
\put(305,55){$A$}
\put(-120,157){$\varphi(t_{2k-3})$}\put(-120,57){$\varphi(t_{2k-1})$}
\put(-120,-23){$\varphi(t_{2\ell-2})$}
\put(160,157){$\varphi(t_{2k-3})$}\put(160,57){$\varphi(t_{2k-1})$}
\put(160,-23){$\varphi(t_{2\ell-2})$}
 \put(25,115){$B$} \put(305,-25){$B$}
\put(305,135){$B$} \put(25,-25){$B$}
\put(20,150){\circle{9}}\put(20,70){\circle{9}}\put(20,30){\circle{9}}
\put(300,150){\circle{9}}\put(300,70){\circle{9}}\put(300,30){\circle{9}}
%\multiput(124,-35)(6,0){11}{\line(1,0){3}}
%\put(190,-38){$c_2$}
\linethickness{0.4mm} \put(-40,170){\line(1,0){60}}
\put(-40,-10){\line(1,0){80}} \put(0,150){\line(1,0){80}}
\put(0,130){\line(1,0){60}} \put(-20,110){\line(1,0){60}}
\put(0,90){\line(1,0){40}} \put(0,70){\line(1,0){60}}
\put(-20,50){\line(1,0){60}} \put(0,30){\line(1,0){40}}
\put(0,10){\line(1,0){40}} \put(20,-30){\line(1,0){60}}
\put(280,30){\line(1,0){40}} \put(260,150){\line(1,0){60}}
\put(240,170){\line(1,0){60}} \put(240,-10){\line(1,0){80}}
\put(280,130){\line(1,0){40}} \put(280,110){\line(1,0){40}}
\put(280,90){\line(1,0){40}} \put(280,30){\line(1,0){40}}
\put(280,10){\line(1,0){40}} \put(300,-30){\line(1,0){40}}
\put(280,70){\line(1,0){60}} \put(260,50){\line(1,0){60}}
\put(-40,-10){\line(0,1){180}} \put(80,-30){\line(0,1){180}}
\put(0,130){\line(0,1){20}}
\put(0,70){\line(0,1){20}}\put(0,10){\line(0,1){20}}
\put(40,90){\line(0,1){20}}\put(40,30){\line(0,1){20}}\put(40,-10){\line(0,1){20}}
\put(60,70){\line(0,1){60}} \put(-20,50){\line(0,1){60}}
\put(240,-10){\line(0,1){180}} \put(260,50){\line(0,1){100}}
\put(280,110){\line(0,1){20}}
\put(320,130){\line(0,1){20}}\put(320,90){\line(0,1){20}}
\put(280,70){\line(0,1){20}}\put(320,30){\line(0,1){20}}
\put(280,10){\line(0,1){20}}\put(320,-10){\line(0,1){20}}
\put(340,-30){\line(0,1){100}}%\put(40,30){\line(0,1){20}}\put(40,-10){\line(0,1){20}}
%\put(60,70){\line(0,1){60}} \put(-20,50){\line(0,1){60}}
%%
%\put(85,0){\line(0,1){35}}
%\put(85,70){\line(0,1){35}}
%\put(120,-35){\line(0,1){175}}
\multiput(20,-40)(0,20){12}{\circle*{3}}
\multiput(300,-40)(0,20){12}{\circle*{3}}
\end{picture}

\end{center}
\begin{center}
{\it Figure 2.}
\end{center}
\textit{In the notation of \ref {3.4} this compares the cases when
$\iota_1$ is not isolated (on the left) and $\iota_1$ is isolated
(on the right). The same conventions as in Figure 1 apply, where
in addition the additional subscript $0$ to $c_1,c_2$ refers to
the application of the rule described in \ref {3.4}. The
particular case here corresponds to taking $\ell =k+1$. For the
general case one must extend the two outermost lines in each
diagram downwards and insert a further $\ell-(k+1)$ copies of the
basic repeating pattern for negative signature. This is
illustrated by Figure 3 in which the case $\ell=k+2$ is
considered.}

\bigskip

We remark that one may have an isolated point in a region of
negative signature.  This is illustrated in Figure $4$.

\smallskip

One checks from \ref {3.4}, \ref {3.5} that the map $\chi$ defined
above is an injection from the set of internal turning points
lying in $A$ to the set of turning points in $B$. In all cases the
latter set has cardinality one greater than the former.  Thus the
cokernel of $\chi$ is a singleton which we call the undecided
element $d \in B$. It is clear that the above algorithm just
leaves at most one boundary value of $d$ undecided. The exact
location of $d$ depends on the signature of $\mathscr O$ as we now
explain.

Suppose $p$ is odd and recall $\varphi(t_1)$ has positive
signature.  If $\varphi(t_3)$ is not defined, then
$d=\varphi(t_2)$. It is a finishing point in $B$ and we leave its
unique neighbour unchanged. If $\varphi(t_3)$ is defined and has
positive signature, then $d=\varphi(t_2)$ and we change
$\beta_{t_2}$ by adding a simple interval value, namely
$\iota_{t_1,t_2}$.

Finally suppose that $\varphi(t_3)$ has a negative signature. This
corresponds to having $k=2$ in \ref {3.4}.  Then in the notation
of \ref {3.4} one has $d=\varphi(s)$. If $d$ is a finishing point
we leave its unique neighbour unchanged.  Otherwise we change
$\beta_s$ by the rules described in \ref {3.4}$(***)$ or \ref
{3.4}$(**)$, depending on whether $\iota_1$ is reduced to an
isolated value or not.

\smallskip

\textit{One may remark that when $\varphi(t_3)$ is defined the
solutions given in the above two paragraphs would result if we
were to treat $\varphi(t_1)$ as if it were an internal turning
point.}

\smallskip

Suppose $p$ is even.  If $\varphi(t_3)$ has positive signature,
Then $d=\varphi(1)$ and we leave its unique neighbour unchanged.
The case when $\varphi(t_3)$ has negative signature will be
postponed to \ref {3.9}.

Note that if $\varphi(t')$ is an end-point, namely $t'=1$ (resp.
$t'=n$), its unique boundary value, that is $\beta_1$ (resp.
$\beta_{n-1}$) is left unchanged by the above procedure. However
this will be modified in \ref {3.8}.

This (nearly !) completes our description of $\Pi^*$.  What can
happen however is that condition c) of \ref {2.6} can sometimes
fail and this will need a further modification to be described in
\ref {3.8}.

\subsection{}\label{3.6}

Define $\beta'_i:i \in I$ through the rules described in \ref
{3.1} - \ref {3.5}, set $\beta_i^*=\epsilon_i \beta'_i$ and
$\Pi^*=\{\beta^*_i\}_{i\in I}$.

Here we show that $\Pi^*$ satisfies condition a) of \ref {2.6} by
exhibiting an  ordering so that nearest neighbours have a strictly
negative scalar product (which we call a link) and that no other
non-zero scalar products exists between distinct elements.  We
call such a succession of links, a sub-chain.

In this we shall assume that $k$ and $\ell$ of \ref {3.4} are
defined, otherwise one just obtains a degeneration of that case.

Retain the notation of \ref {3.4}.  Our construction gives a link
between $\beta^*_{t_{2j-1}-1}$ (if it is defined) and
$\beta^*_{t_{2j}-1}$ which is connected via a sub-chain to the
elements in the support of $I_{t_{2j-1},t_{2j}}$ taken in the
\textit{reverse} order.  Furthermore the last element in this
chain, namely $\beta^*_{t_{2j-1}}$ is linked to $\beta^*_{t_{2j}}$
which is connected via a sub-chain of elements in the support of
$I_{t_{2j},t_{2j+1}}$ taken in their natural order to
$\beta^*_{t_{2j+1}-1}$, by repeating the pattern in Figure $1$ the
appropriate number of times. This process is repeated till one
reaches $\beta^*_{t_{2k-3}-1}$ which lies just above the last
turning point in $A$ with positive signature. Thus
$\beta^*_{t_{2j-1}-1}$ is connected via a sub-chain to
$\beta^*_{t_{2k-3}-1}$.

A similar (reversed) phenomenon occurs in a region of negative
signature.  In particular $\beta^*_{t_{2k-1}}$ is connected via a
sub-chain to $\beta^*_{t_{2\ell-2}-1}$, which is in turn linked
via \ref {3.4}$(*)$ to $\beta^*_{t_{2k-3}-1}$. This can be
illustrated by simply making a mirror reflection of Figure $1$
perpendicular to its main axis (in simple language turning it
upside down).

If $\iota_1$ is not reduced to an isolated value, then
$\beta^*_{t_{2k-1}}$ is connected via a sub-chain to
$\beta^*_{t_{2k-3}}$ which is in turn linked to
$\beta^*_{t_{2\ell-2}}$ via \ref {3.4}$(**)$, the latter being
connected by a sub-chain to $\beta^*_{t_{2\ell-1}-1}$.

Thus using a line to designate a link or a sub-chain we may
summarize the above as
$$\beta^*_{t_{2j-1}-1}-\beta^*_{t_{2k-3}-1}-\beta^*_{t_{2\ell-2}-1}-
\beta^*_{t_{2k-1}}-\beta^*_{t_{2k-3}}-\beta^*_{t_{2\ell-2}}-\beta^*_{t_{2\ell-1}-1}.
\eqno {(*)}$$

Except for the two extreme terms the links or sub-chains between
these elements are illustrated in the left hand side of Figure
$3$.

\begin{center}

\begin{picture}(600,400)(-10,-170)
\put(20,-165){\line(0,1){360}} \put(300,-165){\line(0,1){360}}
\multiput(-80,160)(5,0){20}{\line(1,0){3}}
\multiput(-80,60)(5,0){20}{\line(1,0){3}}
\multiput(-80,-140)(5,0){20}{\line(1,0){3}}
\multiput(20,170)(5,0){20}{\line(1,0){3}}
\multiput(20,150)(5,0){20}{\line(1,0){3}}
\multiput(20,-130)(5,0){20}{\line(1,0){3}}
\multiput(300,-150)(5,0){20}{\line(1,0){3}}
\multiput(300,-130)(5,0){20}{\line(1,0){3}}
\multiput(20,-150)(5,0){20}{\line(1,0){3}}
\multiput(20,50)(5,0){20}{\line(1,0){3}}
\multiput(200,160)(5,0){20}{\line(1,0){3}}
\multiput(200,60)(5,0){20}{\line(1,0){3}}
\multiput(200,-140)(5,0){20}{\line(1,0){3}}
\multiput(300,170)(5,0){20}{\line(1,0){3}}
\multiput(300,150)(5,0){20}{\line(1,0){3}}
\multiput(300,70)(5,0){20}{\line(1,0){3}}
\multiput(300,130)(5,0){20}{\line(1,0){3}}
\multiput(300,50)(5,0){20}{\line(1,0){3}}
\put(120,167){$\beta_{t_{2k-3}-1}^*$}
\put(400,167){$\beta_{t_{2k-3}-1}^*$}
\put(400,127){$\beta_{t_{2k-2}}^*$}

\put(120,147){$\beta_{t_{2k-3}}^*$}
\put(400,147){$\beta_{t_{2k-3}}^*$}
\put(120,-133){$\beta^*_{t_{2\ell-2}-1}$}
\put(120,-153){$\beta^*_{t_{2\ell-2}}$}
\put(400,-133){$\beta^*_{t_{2\ell-2}-1}$}
\put(400,-153){$\beta^*_{t_{2\ell-2}}$}
%\put(120,127){$c_2$}
\put(120,47){$\beta^*_{t_{2k-1}}$}
\put(400,47){$\beta^*_{t_{2k-1}}$}
 \put(400,67){$\beta^*_{t_{2k-1}-1}$}
%\put(120,-33){$c_{0,2}$}
%\put(400,167){$c_{0,1}$}
%\put(400,127){$c_2$}\put(400,47)

\put(25,155){$A$} \put(25,55){$A$} \put(305,-65){$A$}
\put(25,-65){$A$} \put(305,155){$A$} \put(305,55){$A$}
\put(-120,157){$\varphi(t_{2k-3})$}\put(-120,57){$\varphi(t_{2k-1})$}
\put(-120,-143){$\varphi(t_{2\ell-2})$}
\put(160,157){$\varphi(t_{2k-3})$}\put(160,57){$\varphi(t_{2k-1})$}
\put(160,-143){$\varphi(t_{2\ell-2})$}
 \put(25,115){$B$} \put(305,-25){$B$}
\put(305,135){$B$} \put(25,-25){$B$} \put (25,-145){$B$}\put
(305,-145){$B$}
\put(20,150){\circle{9}}\put(20,70){\circle{9}}\put(20,30){\circle{9}}
\put(20,-50){\circle{9}}\put(20,-90){\circle{9}}
\put(300,150){\circle{9}}\put(300,70){\circle{9}}\put(300,30){\circle{9}}
\put(300,-50){\circle{9}}\put(300,-90){\circle{9}}
%\multiput(124,-35)(6,0){11}{\line(1,0){3}}
%\put(190,-38){$c_2$}
\linethickness{0.4mm} \put(-40,170){\line(1,0){60}}
\put(-40,-130){\line(1,0){80}} \put(0,150){\line(1,0){80}}
\put(0,130){\line(1,0){60}} \put(-20,110){\line(1,0){60}}
\put(0,90){\line(1,0){40}} \put(0,70){\line(1,0){60}}
\put(-20,50){\line(1,0){60}} \put(0,30){\line(1,0){40}}
\put(0,10){\line(1,0){40}} \put(20,-150){\line(1,0){60}}
\put(280,30){\line(1,0){40}} \put(260,150){\line(1,0){60}}
\put(240,170){\line(1,0){60}} \put(240,-130){\line(1,0){80}}
\put(280,130){\line(1,0){40}} \put(280,110){\line(1,0){40}}
\put(280,90){\line(1,0){40}} \put(280,30){\line(1,0){40}}
\put(280,10){\line(1,0){40}} \put(300,-150){\line(1,0){40}}
\put(280,70){\line(1,0){60}} \put(260,50){\line(1,0){60}}
\put(-20,-10){\line(1,0){60}}\put(260,-10){\line(1,0){60}}
\put(0,-30){\line(1,0){40}}\put(280,-30){\line(1,0){40}}
\put(0,-70){\line(1,0){40}}\put(280,-70){\line(1,0){40}}
\put(0,-90){\line(1,0){40}}\put(280,-90){\line(1,0){40}}
\put(0,-110){\line(1,0){40}}\put(280,-110){\line(1,0){40}}
\put(-20,-50){\line(1,0){60}}\put(260,-50){\line(1,0){60}}
\put(-40,-130){\line(0,1){300}} \put(80,-150){\line(0,1){300}}
\put(0,130){\line(0,1){20}}
\put(0,70){\line(0,1){20}}\put(0,10){\line(0,1){20}}
\put(40,90){\line(0,1){20}}\put(40,30){\line(0,1){20}}\put(40,-10){\line(0,1){20}}
\put(60,70){\line(0,1){60}} \put(-20,50){\line(0,1){60}}
\put(240,-130){\line(0,1){300}} \put(260,50){\line(0,1){100}}
\put(280,110){\line(0,1){20}}
\put(320,130){\line(0,1){20}}\put(320,90){\line(0,1){20}}
\put(280,70){\line(0,1){20}}\put(320,30){\line(0,1){20}}
\put(280,10){\line(0,1){20}}\put(320,-10){\line(0,1){20}}
\put(340,-150){\line(0,1){220}}
\put(-20,-50){\line(0,1){40}}\put(260,-50){\line(0,1){40}}
\put(0,-70){\line(0,1){40}}\put(280,-70){\line(0,1){40}}
\put(40,-50){\line(0,1){20}} \put(320,-50){\line(0,1){20}}
\put(40,-90){\line(0,1){20}}\put(320,-90){\line(0,1){20}}
\put(40,-130){\line(0,1){20}}\put(320,-130){\line(0,1){20}}
\put(0,-110){\line(0,1){20}}\put(280,-110){\line(0,1){20}}
%%
%\put(85,0){\line(0,1){35}}
%\put(85,70){\line(0,1){35}}
%\put(120,-35){\line(0,1){175}}
\multiput(20,-160)(0,20){18}{\circle*{3}}
\multiput(300,-160)(0,20){18}{\circle*{3}}
\end{picture}

\end{center}
\begin{center}
{\it Figure 3.}
\end{center}

\textit{The left (resp.) right figure illustrates the links or
sub-chains in $(*),(**)$) (resp. $(*),(***)$) of \ref {3.4}. The
conventions of Figures 1,2 apply. Compared to Figure 2 one has
$\ell=k+2$.}

\bigskip

If $\iota_1$ is reduced to an isolated value, one checks (taking
account of the alternating signs of the $\epsilon_i$) that
$\beta^*_{t_{2\ell-2}}$ is linked to $\beta^*_{t_{2k-1}-1}$ via
\ref {3.4}$(***)$.  Moreover the latter is connected by a
sub-chain to $\beta^*_{t_{2k-2}}$ which is linked to
$\beta^*_{t_{2k-3}}$ in turn linked to $\beta^*_{t_{2k-1}}$.  (See
Figure 3). In this we remark that
$\beta'_{t_{2k-1}}=\beta_{t_{2k-1}}+\iota_2$ and
$\beta'_{t_{2k-2}}=\beta_{t_{2k-2}}+\iota_1$.

As before we may summarize the above as
$$\beta^*_{t_{2j-1}-1}-\beta^*_{t_{2k-3}-1}-\beta^*_{t_{2\ell-2}-1}-
\beta^*_{t_{2k-1}}-\beta^*_{t_{2k-3}}-\beta^*_{t_{2k-2}}-\beta^*_{t_{2k-1}-1}-
\beta^*_{t_{2\ell-2}}-\beta^*_{t_{2\ell-1}-1}. \eqno{(**)}$$

Except for the two extreme terms the links or sub-chains between
these elements are illustrated in the right hand side of Figure
$3$.

In both cases $\beta^*_{t_{2j-1}-1}$ is connected via a sub-chain
to $\beta^*_{t_{2\ell-1}-1}$ and the process is then repeated.
Notice that in both cases the sub-chain passes through all the
$\beta^*$ of the left hand or right hand side of the figure.

We may summarize the above by the
\begin {lemma}  Condition a) of \ref {2.6} is satisfied by $\Pi^*$.
\end{lemma}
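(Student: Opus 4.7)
The plan is to establish that $\Pi^* = \{\beta^*_i\}_{i \in I}$ is a simple root system of type $A_{n-1}$ by exhibiting an explicit linear ordering under which the Gram matrix of $(\ ,\ )$ takes the tridiagonal form characteristic of such a system. Since $|\Pi^*| = n - 1 = \rank \mathfrak{g}$, it suffices to show (i) each $\beta^*_i$ is a root, (ii) there is a bijection $I \iso I$ along which the elements $\beta^*_i$ form a chain with each nearest-neighbour pair having strictly negative scalar product, and (iii) all other pairs are mutually orthogonal. Once (i)--(iii) are verified, linear independence follows from standard facts about the Cartan matrix of $A_{n-1}$, and since $\Pi^*$ lies in $\mathfrak h^*$ of dimension $n-1$, it must be a simple root system.

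For (i), the unchanged values $\beta^*_i = \epsilon_i\beta_i$ are roots by Corollary \ref{2.5}, while the changed values are roots by the Lemma of \ref{3.2}. To establish (ii) and (iii), I would proceed region by region, where a region means a maximal interval between successive changes of signature. Within a region of constant (say positive) signature the basic repeating pattern of Figure 1 produces, at each internal turning point $\varphi(t_{2j-1}) \in A$, precisely one link from the non-nil boundary value $\beta^*_{t_{2j-1}-1}$ down through the sub-chain $\beta^*_{t_{2j}-1}, \ldots, \beta^*_{t_{2j-1}}$ (taken in reverse) and then on to $\beta^*_{t_{2j}}$, continuing through the support of $I_{t_{2j},t_{2j+1}}$ in natural order. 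Verifying that nearest neighbours in this listing have a strictly negative inner product is a local check, since each $\beta_{i,j}$ shares exactly one coordinate $\varepsilon_{\varphi(\cdot)}$ with the next.

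The key step is to check how the sub-chain continues across a signature transition, using the identities \ref{3.4}$(*)$--$(***)$. In the non-isolated case, \ref{3.4}$(*)$ gives a link from $\beta^*_{t-1}$ (the top of the negative-signature region) to the bottom of the previous positive region, while \ref{3.4}$(**)$ gives the analogous link at the lower end; this produces the full sub-chain displayed in \ref{3.6}$(*)$. In the isolated case, \ref{3.4}$(***)$ replaces the role of \ref{3.4}$(**)$, and one must check that the resulting reshuffling yields exactly the chain displayed in \ref{3.6}$(**)$. Both checks reduce to verifying that the intended ``linking'' elements share exactly one $\varepsilon_{\varphi(\cdot)}$ coordinate, which is read off from the definitions $\beta_{i,j} = \varepsilon_{\varphi(i)} - \varepsilon_{\varphi(j)}$.

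For (iii), orthogonality of non-adjacent elements follows from the observation that two distinct $\beta_{i,j}, \beta_{k,\ell}$ in the $\varepsilon$-basis are orthogonal exactly when $\{\varphi(i),\varphi(j)\} \cap \{\varphi(k),\varphi(\ell)\} = \emptyset$, so one must check in each of the listed sub-chains that only the designated nearest neighbours share an endpoint. The main obstacle will be bookkeeping: managing the alternation of the signs $\epsilon_i$, the distinction between internal and end turning points, the degenerate cases where $k$ or $\ell$ of \ref{3.4} is undefined, and in particular the extra twist in the isolated-value case where $\beta'_{t_{2k-1}} = \beta_{t_{2k-1}} + \iota_2$ and $\beta'_{t_{2k-2}} = \beta_{t_{2k-2}} + \iota_1$ conspire to preserve the chain structure. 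Since the construction is periodic across signature changes, once the pattern is verified through one double signature change the result propagates by induction on the number of sign changes $r$ of the signature sequence defined in \ref{3.3}.
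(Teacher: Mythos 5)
Your proposal is correct and follows essentially the same route as the paper: exhibiting an ordering of the $\beta^*_i$ into a single chain (negative scalar products between designated neighbours, orthogonality otherwise), built region by region from the repeating pattern of Figure 1 and glued across signature changes by the identities \ref{3.4}$(*)$--$(***)$, yielding exactly the sub-chains $(*)$ and $(**)$ of \ref{3.6}. The only difference is that you spell out more explicitly the reduction to the tridiagonal Gram matrix and the $\varepsilon$-coordinate criterion for orthogonality, which the paper leaves implicit.
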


\textbf{Remark}.  This result can be read off more easily though
less rigorously from Figure $4$ which is a paradigm for the
general case (except when $\iota_1$ is an isolated value; but then
one combines it with Figure $3$).

\subsection{}\label{3.7.0}

We define a partial order $\leq$ on $I$ as follows.  The smallest
elements are those $i \in I$ for which $\beta_i$ is unchanged.  If
$\iota_1$ as defined in \ref {3.4} is reduced to an isolated value
say $\beta_{j-1}$, then we set $\beta_j > \beta_{j-1}$ recalling
that $\beta_{j-1}$ is left unchanged and
$\beta_j'=\beta_j+\beta_{j-1}$.

If $m=1$ in \ref {3.4}$(***)$ , then $\beta_s$ occurring there is
changed by a simple interval value namely $\iota_3$.  However for
the present purposes it is convenient to view this as a compound
interval value.  Then the largest elements of $i \in I$ are just
those for which $\beta_i$ is changed by adding a compound interval
value, namely when $i=t-1,s$ in the notation of \ref {3.4}, within
that double signature change.   Continuing with this convention we
may easily observe that if $\beta'_j=\beta_j+\iota$ is a simple
interval value, then the $\beta_i$ in the support of $\iota$ are
unchanged (see Figure 2, for example).  Thus if we let $j \in I$
for which $\beta_j$ is changed by a simple interval value, to be
the second largest elements of I, it follows that $\leq$ is
well-defined and lifts to a total order (which we also denote by
$\leq$) giving the

\begin {lemma}  With respect to $\leq$ the transformation taking
the $\epsilon_i\beta_i$ to $\beta_i^*$ is triangular with ones on
the diagonal.
\end {lemma}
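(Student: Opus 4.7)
The plan is to unwind the construction in \ref{3.1}--\ref{3.5}. By definition
\begin{equation*}
\beta_i^* \;=\; \epsilon_i\beta_i' \;=\; \epsilon_i\beta_i \;+\; \epsilon_i\iota_i,
\end{equation*}
where $\iota_i = 0$ when $\beta_i$ is unchanged and otherwise $\iota_i$ is a simple or compound interval value, hence a sum of $\beta_j$'s over some support set $S_i \subset I \setminus \{i\}$. Rewriting $\epsilon_i\iota_i = \sum_{j \in S_i}(\epsilon_i/\epsilon_j)(\epsilon_j\beta_j)$, one reads off that the diagonal entry of the change-of-basis matrix is $1$; triangularity then amounts to the assertion $j < i$ for every $j \in S_i$. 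The order of \ref{3.7.0} stratifies $I$ into three levels --- the smallest (unchanged $i$), the middle (simple-interval changes) and the largest (compound-interval changes, refined by the extra relation $\beta_j > \beta_{j-1}$ in the isolated-value subcase) --- and it suffices to show that $S_i$ lies in levels strictly below that of $i$.

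The case $i$ unchanged is vacuous. For $i$ in the middle level, so $\iota_i = \iota_{t,t'}$ with $\varphi(t), \varphi(t')$ adjacent turning points, the observation recorded just above the lemma --- that every index in $S_i$ is unchanged --- is precisely what is required. For completeness one checks that an interior index of $[t, t'-1]$ is not a boundary, hence unchanged by Rule 1) of \ref{3.1}; that the flanking index $\beta_t$ is either the nil boundary of $\varphi(t)$ opposite to $\beta_i$ (by Lemma \ref{2.7}(iii) when $\varphi(t) \in A$, and by the $B$-point rule of \ref{3.5} otherwise) or is isolated and so nil by Lemma \ref{2.8}; and that the flanking index $\beta_{t'-1}$ at $\varphi(t')$ shares the endpoint $\pm\varepsilon_{\varphi(t')}$ with $\beta_i'$, so has strictly positive scalar product with $\beta_i'$ and is therefore the $\beta_w$ left unchanged by the rule of \ref{3.5} (the isolated-value exception cannot arise here since $\iota_i$ is simple). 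Thus $S_i$ lies entirely in the smallest level.

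For $i$ in the largest level one has $i \in \{t-1, s\}$ in the notation of \ref{3.4}, and the corresponding support $S_i$ is contained in the index interval $[t, s-1]$, which excludes both $t-1$ and $s$. Since compound-change indices arising from distinct double signature changes occupy disjoint intervals, no element of $S_i$ is itself compound-changed, so $S_i$ lies in the union of the two smaller levels. In the isolated-value variant treated by equation $(\ast\ast\ast)$ of \ref{3.4}, the change degenerates to $\beta_j' = \beta_j + \beta_{j-1}$ with $\beta_{j-1}$ unchanged, and the convention $\beta_j > \beta_{j-1}$ built into \ref{3.7.0} places this case correctly within the total order.

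The only point requiring genuine care is the simple-interval case, where one must rule out that a boundary value flanking $\iota_{t,t'}$ is itself changed; this rests on combining Lemma \ref{2.7}, Lemma \ref{2.8} and the scalar-product rule of \ref{3.5}, without which the partial order of \ref{3.7.0} could admit cycles and the matrix would fail to be triangular.
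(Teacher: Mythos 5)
Your strategy is the same as the paper's (stratify $I$ into the levels unchanged / changed by a simple interval value / changed by a compound interval value, and check that each added interval value is supported strictly below the changed index), but there is a genuine error at exactly the point you yourself flag as delicate. You assert that the flanking index $\beta_{t'-1}$ at the far end of an added \emph{simple} interval is always the $\beta_w$ left unchanged, adding that ``the isolated-value exception cannot arise here since $\iota_i$ is simple''. This is backwards: the exceptional rule of \ref{3.5} occurs precisely for a simple added interval. In the notation of \ref{3.4} and \ref{3.6}, when $\iota_1$ is reduced to an isolated value one has simultaneously $\beta'_{t_{2k-1}}=\beta_{t_{2k-1}}+\iota_2$ (a simple interval change at the $A$-point $\varphi(t_{2k-1})$) and $\beta'_{t_{2k-2}}=\beta_{t_{2k-2}}+\iota_1$; here $\beta_{t_{2k-2}}$ is exactly the boundary value with strictly positive scalar product against $\beta'_{t_{2k-1}}$, it lies in the support $\{t_{2k-2},\ldots,t_{2k-1}-1\}$ of $\iota_2$, and it \emph{is} changed. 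So the support of a simple added interval need not consist solely of unchanged values, and with your three strict levels the two indices $t_{2k-1}$ and $t_{2k-2}$ both sit in the middle level with no relation between them; triangularity of the row indexed by $t_{2k-1}$ is therefore not established. One must in addition place $t_{2k-2}$ (whose own change involves only the unchanged isolated value $\iota_1=\beta_{t_{2k-2}-1}$) strictly below $t_{2k-1}$; this is why \ref{3.7.0} singles out that index through the extra relation $\beta_j>\beta_{j-1}$, and the lift to a total order must respect this placement.

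Relatedly, you misattribute that extra relation: it does not arise from \ref{3.4}$(***)$ (which concerns $\beta_s$ and is, by the stated convention, counted among the compound changes), nor does it ``refine the largest level''; it concerns the simple change $\beta'_w=\beta_w+\iota_1$ at the $B$-turning point $\varphi(t_{2k-2})$ produced by the exceptional rule of \ref{3.5}. Your handling of the unchanged and compound levels (supports contained in $[t,s-1]$, hence avoiding $t-1$ and $s$, and disjointness across distinct double signature changes) is fine, but the middle-level step must be repaired as above — by admitting the exceptional configuration and ordering the isolated-value index below the simple changes whose added intervals contain it — before the lemma is actually proved.
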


\subsection{}\label{3.7}

\begin {prop} $\Pi^*$ satisfies conditions a),
b) and d) of \ref {2.6}.
\end {prop}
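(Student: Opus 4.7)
Conditions a) and b) follow from earlier results in the section: a) is the Lemma of \ref{3.6}, and b) for $i$ with $\beta^*_i \neq \epsilon_i\beta_i$ is the Lemma of \ref{3.2}, whilst for other $i$ the hypothesis of b) is vacuous. The substance of the proposition is therefore condition d).

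Since $\epsilon_i\beta_i$ is a root of $\mathfrak g$ and $\Pi^*$ is a simple root system by a), the root $\epsilon_i\beta_i$ lies either in $\mathbb N\Pi^*$ or in $-\mathbb N\Pi^*$, and the task is to rule out the second alternative. The plan is to establish $\epsilon_i\beta_i\in\mathbb N\Pi^*$ by induction on the total order $\leq$ of the Lemma of \ref{3.7.0}. The base case ($\beta_i$ unchanged) is immediate since then $\epsilon_i\beta_i=\beta^*_i$. For the inductive step $\beta^*_i=\epsilon_i(\beta_i+\iota)$ for the relevant interval value $\iota$, so $\epsilon_i\beta_i=\beta^*_i-\epsilon_i\iota$ and it suffices to prove $-\epsilon_i\iota\in\mathbb N\Pi^*$.

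When $\iota$ is a simple interval value, its support consists of non-boundary $\beta_j$'s (necessarily unchanged) together with the two boundary values $\beta_{t_s},\beta_{t_{s+1}-1}$ of which any nil ones are unchanged by rule 1). For every unchanged $\beta_j$ in $\supp(\iota)$ one has $\epsilon_j=-\epsilon_i$, since $\iota$ lies on the opposite side of the turning point $\varphi(t_s)$ from $\beta_i$, so $\beta_j=\epsilon_j\beta^*_j$ contributes $+\beta^*_j$ to $-\epsilon_i\iota$. For a changed boundary value $\beta_j$ at the far end, the inductive hypothesis gives $\beta_j=\epsilon_j\beta^*_j-\iota_j$ with $\iota_j$ a simple interval on the opposite side again; substituting produces a $+\beta^*_j$ contribution together with $-\epsilon_i\iota_j$, which is itself in $\mathbb N\Pi^*$ by induction. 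Collecting these one obtains $-\epsilon_i\iota\in\mathbb N\Pi^*$.

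The main obstacle is the compound interval case arising in a double signature change (\ref{3.4}): here $\iota=\iota_1+\cdots+\iota_{2m+1}$ traverses $2m$ intermediate turning points and the $\epsilon_j$ on $\supp(\iota)$ alternate in blocks of constant sign, so that direct summation produces contributions of both signs. The resolution is that by construction the same $\iota$ (or, in the isolated case of \ref{3.4}$(***)$, a closely related interval) is added simultaneously to both $\beta_{t-1}$ and $\beta_s$, and the sub-chain structure of \ref{3.6}$(*)$ (respectively \ref{3.6}$(**)$) organises the nested changed boundary values inside $\supp(\iota)$ so that expanding each via the inductive hypothesis makes the negative blocks cancel against the $-\iota_j$ terms supplied by the intermediate simple changes. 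The parity condition $r-s\in 2\mathbb N+1$ of \ref{3.2} is what ensures the telescoping terminates with all coefficients non-negative, completing the proof of d).
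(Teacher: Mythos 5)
Your handling of a) and b) is fine and matches the paper (a) is Lemma \ref{3.6}; b) comes from the fact that every added interval value is an odd sum of adjacent simple interval values, which is the content of Lemma \ref{3.2} extended to the changes of \ref{3.4}--\ref{3.5}). The problem is d). You correctly record the two facts that make d) accessible --- that each $\epsilon_i\beta_i$ is a root, hence either in $\mathbb N\Pi^*$ or in $-\mathbb N\Pi^*$, and the unitriangularity of Lemma \ref{3.7.0} --- but you then abandon the short route they provide. Unitriangularity means that, expanding $\epsilon_i\beta_i$ in the basis $\Pi^*$, the coefficient of $\beta^*_i$ equals $1$; since all coefficients of a root over a simple root system have one sign, this single positive coefficient already forces $\epsilon_i\beta_i\in\mathbb N\Pi^*$. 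That is the paper's entire argument for d), and it requires no examination of which values were changed or how.

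Instead you try to prove the stronger statement $-\epsilon_i\iota\in\mathbb N\Pi^*$ for each added interval value $\iota$, by induction on the order of \ref{3.7.0}. In the simple-interval case this can be pushed through (indeed by the observation in \ref{3.7.0} the support of a simple added interval consists of unchanged values, so your case of a ``changed boundary value at the far end'' does not even occur there, and your signs need care since $\epsilon_j=-\epsilon_i$ on that support). But in the compound case arising from a double signature change --- which is exactly where the difficulty of the whole construction lies --- your argument is only an assertion: you acknowledge that the blocks of alternating $\epsilon_j$ produce contributions $-\beta^*_j$ of the wrong sign, and then claim without computation that the sub-chain structure of \ref{3.6} and the parity condition make these ``telescope'' away. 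No coefficients are tracked, so nothing is proved; and since the truth of $-\epsilon_i\iota\in\mathbb N\Pi^*$ is most easily seen as a consequence of d) itself together with unitriangularity, the intended induction is perilously close to circular precisely in the hard case. To repair the proof, drop the direct verification and use the dichotomy plus Lemma \ref{3.7.0} as above.
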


\begin {proof}  Condition a) is just Lemma \ref {3.6}.
Condition b) is verified by Lemma \ref {2.8} and the fact that the
added interval values are always a sum of an odd number of
sequentially adjacent simple interval values.

For condition d) we remark that the $\epsilon_i\beta_i: i \in I$
are roots and therefore are either positive or negative roots with
respect to $\Pi^*$.  Yet they must be positive roots by Lemma \ref
{3.7.0}.

\end {proof}

\begin{center}

\begin{picture}(120,600)(-60,-230)
\put(-220,-170){\line(0,1){100}} \put(50,-220){\line(0,1){580}}

\multiput(50,360)(6,0){20}{\line(1,0){3}} \put(167,357){$c_1$}
\multiput(50,280)(6,0){20}{\line(1,0){3}} \put(167,277){$c_2$}
\multiput(50,220)(6,0){20}{\line(1,0){3}} \put(167,217){$c_{0,1}$}
\multiput(50,160)(6,0){20}{\line(1,0){3}}\put(167,157){$c_2$}
\multiput(50,100)(6,0){20}{\line(1,0){3}}\put(167,97){$c_1$}
\multiput(50,60)(6,0){20}{\line(1,0){3}}\put(167,57){$c_2$}
\multiput(50,20)(6,0){20}{\line(1,0){3}}\put(167,17){$c_1$}
\multiput(50,-60)(6,0){20}{\line(1,0){3}}\put(167,-63){$c_{0,2}$}
\multiput(50,-100)(6,0){20}{\line(1,0){3}}\put(167,-103){$c_{0,1}$}
\multiput(-200,-100)(6,0){10}{\line(1,0){3}}\put(-142,-103){$c_1$}
\multiput(50,-180)(6,0){20}{\line(1,0){3}}\put(167,-183){$c_1$}

\multiput(-56,350)(6,0){18}{\line(1,0){3}}\put(-100,347){$\varphi(t_{2j-1})$}
\multiput(-56,210)(6,0){18}{\line(1,0){3}}\put(-100,207){$\varphi(t_{2k-3})$}
\multiput(-56,110)(6,0){18}{\line(1,0){3}}\put(-100,107){$\varphi(t_{2k-1})$}
\multiput(-56,-50)(6,0){18}{\line(1,0){3}}\put(-100,-53){$\varphi(t_{2\ell-2})$}
\multiput(-56,-110)(6,0){18}{\line(1,0){3}}\put(-100,-113){$\varphi(t_{2\ell-1})$}

\put(55,347){$A$}
\put(55,347){$A$}\put(55,207){$A$}\put(55,107){$A$}
\put(55,27){$A$}\put(55,-113){$A$}\put(55,-213){$A$}\put(-215,-113){$A$}

\put(55,287){$B$}\put(55,147){$B$}\put(55,47){$B$}
\put(55,-54){$B$}\put(55,-173){$B$}\put(-215,-173){$B$}

\put(50,340){\circle{7}} \put(50,240){\circle{7}}
\put(50,200){\circle{7}} \put(50,120){\circle{7}}
\put(50,80){\circle{7}} \put(50,40){\circle{7}}
\put(50,0){\circle{7}} \put(50,-80){\circle{7}}
\put(50,-120){\circle{7}}
\put(50,-200){\circle{7}}\put(-220,-120){\circle{7}}

%\multiput(93,40)(6,0){5}{\line(1,0){3}} \put(127,37){$c_2$}
\linethickness{0.4mm} %\put(10,160){\line(1,0){60}}

\put(50,300){\line(1,0){20}} \put(-220,-120){\line(1,0){20}}
\put(30,320){\line(1,0){40}}\put(30,260){\line(1,0){40}}\put(30,240){\line(1,0){40}}
\put(30,180){\line(1,0){40}}\put(30,160){\line(1,0){40}}\put(30,120){\line(1,0){40}}
\put(30,60){\line(1,0){40}}\put(30,40){\line(1,0){40}}\put(30,20){\line(1,0){40}}
\put(30,80){\line(1,0){40}}\put(30,0){\line(1,0){40}}\put(30,-20){\line(1,0){40}}
\put(30,-80){\line(1,0){40}}\put(30,-140){\line(1,0){40}}\put(30,-160){\line(1,0){40}}
\put(30,-200){\line(1,0){40}}\put(-240,-140){\line(1,0){40}}\put(-240,-80){\line(1,0){40}}

\put(30,340){\line(1,0){60}}\put(30,280){\line(1,0){60}}
\put(30,140){\line(1,0){60}}\put(30,100){\line(1,0){60}}\put(30,-180){\line(1,0){60}}
\put(-240,-160){\line(1,0){60}}\put(-240,-100){\line(1,0){60}}

\put(30,220){\line(1,0){80}}\put(-10,200){\line(1,0){80}}
\put(30,-40){\line(1,0){80}}\put(-10,-60){\line(1,0){80}}
\put(30,-100){\line(1,0){80}}\put(-10,-120){\line(1,0){80}}

%\put(10,60){\line(0,1){100}}
\put(30,320){\line(0,1){20}} \put(70,300){\line(0,1){20}}
\put(30,260){\line(0,1){20}} \put(70,240){\line(0,1){20}}
\put(30,220){\line(0,1){20}} \put(70,180){\line(0,1){20}}
\put(30,160){\line(0,1){20}} \put(30,120){\line(0,1){20}}
\put(30,80){\line(0,1){20}} \put(70,60){\line(0,1){20}}
\put(30,40){\line(0,1){20}} \put(70,20){\line(0,1){20}}

\put(30,0){\line(0,1){20}} \put(70,-20){\line(0,1){20}}
\put(30,-40){\line(0,1){20}} \put(70,-80){\line(0,1){20}}
\put(30,-100){\line(0,1){20}} \put(30,-160){\line(0,1){20}}
\put(30,-200){\line(0,1){20}} \put(70,-140){\line(0,1){20}}

\put(-200,-140){\line(0,1){20}}
\put(-240,-100){\line(0,1){20}}\put(-240,-160){\line(0,1){20}}

\put(70,120){\line(0,1){40}}
\put(90,100){\line(0,1){40}}\put(70,-200){\line(0,1){40}}

\put(90,280){\line(0,1){60}} \put(-180,-160){\line(0,1){60}}

\put(-10,-60){\line(0,1){260}}\put(110,-40){\line(0,1){260}}

\multiput(50,-210)(0,20){29}{\circle*{3}}
\multiput(-220,-170)(0,20){6}{\circle*{3}}
\multiput(-10,300)(4,0){15}{\line(1,0){2}}

\multiput(-10,300)(0,4){15}{\line(0,1){2}}
\multiput(-10,-220)(0,4){25}{\line(0,1){2}}
\multiput(110,-220)(0,4){30}{\line(0,1){2}}
\multiput(90,-220)(0,4){10}{\line(0,1){2}}
\multiput(-200,-80)(0,4){5}{\line(0,1){2}}

%\multiput(30,-22)(0,4){6}{\line(0,1){2}}
\end{picture}

\end{center}
\begin{center}
{\it Figure 4.} \\
\end{center}

\textit{A diagrammatic presentation of the proof of Lemma \ref
{3.6}, though of course one could increase the size of the regions
of positive and negative signature. The conventions are those of
Figures 1,2. The diagram on the left describes what happens if the
last element of $B$ is an end point. Again if
$\varphi(t_{2\ell-2})$ is replaced by $\varphi(n)=b$, that is to
say $\ell$ is not defined, all lines which lie partly or
completely below the end point $b$ are removed. If the first
element of $A$ is an end point then one omits the dashed lines
starting just above the first element of $B$. In this case the
lines start at the undecided point, namely $\varphi(t_2)$.}

\subsection{}\label{3.8}

By Lemma \ref {2.10}, condition c) of \ref {2.6} can only fail to
hold if $\beta_e$ is an unchanged boundary value (that is
$\beta_e'=\beta_e$) of a unique turning point $\varphi(t) \in B$
which in particular does not have an isolated value as a
neighbour. This may be either an internal turning point or an end
point. The latter case is essentially a degeneration of the former
which we consider first.

The procedure to obtain $\Pi^*$ must be modified in the above
situation.  To be transparent we first recall some features of the
description of $\Pi^*$ and then define the modified simple root
system which we shall denote by $\Pi^{**}$.  Here we recall that
$\Pi=\{\beta_i\}_{i \in I}$ is a simple root system of type
$A_{n-1}$.  In this $\beta_{i-1}, \beta_{i+1}$ will be said to be
the neighbours of $\beta_i$.

In what follows $\iota_1,\iota_2$ are interval values but
\textit{not} necessarily those defined in \ref {3.4}.

Let $\beta_f$ denote the second boundary value of the internal
turning point $\varphi(t)$ defined above. By our assumption and
rule 2) of \ref {3.1}, this boundary value must be changed, that
is to say we have
$$\beta_f'=\beta_f+\iota_1,$$ where $\iota_1$ is an interval value.
Moreover this interval value starts at $\beta_e$ and so
$\iota_1-\beta_e$ is a root. Since $\beta_f$ is a non-nil boundary
value, Lemma \ref {2.8} forces it to admit a neighbour
$\beta_{f'}$ in the simple interval containing $\beta_f$. Moreover
again by Lemma \ref {2.8} either $\beta_{f'}$ is not a boundary
value or it is nil and so by rules 1), 2) of \ref {3.1}, it is
unchanged.  That is $$\beta'_{f'} =\beta_{f'}.$$ From the
definition of $\iota_1$ given in \ref {3.4}, \ref {3.5}, it
follows that $\beta_{f'}+\beta_f+\iota_1$ is root.

The fact that $$\beta'_e=\beta_e,$$ means that either there is a
unique $i(e) \in I$, or simply $i$, such that
$$\beta'_i=\beta_i + \iota_2,$$ for some interval value $\iota_2$,
with $\beta'_i-\beta_e$ being a root, or $\beta^*_e$ is at the end
point of the Dynkin diagram for $\Pi^*$ and in this case we say
that $i(e)$ is not defined.

Since $\beta_e$ is a non-nil boundary value, Lemma \ref {2.8}
forces it to admit a neighbour $\beta_{e'}$ in the simple interval
containing $\beta_e$. The same argument for $\beta_{f'}$ given
above shows that
$$\beta'_{e'}=\beta_{e'}.$$

From the definition of $\iota_2$ given in \ref {3.4}, \ref {3.5},
it follows that $\iota_2-\beta_e$ and
$(\iota_2-\beta_e)-\beta_{e'}$ are roots.

Since $\beta_i$ is a non-nil boundary value to an element of $A$,
it admits by Lemma \ref {2.8}, a unique neighbour $\beta_{i'}$ in
the same interval. Moreover $\beta_{i'}$ cannot be a nil boundary
value (because then it would be a boundary value to an element of
$B$ and this would contradict Lemma \ref {2.7}(ii)) and it cannot
be a non-nil boundary value either (because this would contradict
Lemma \ref {2.8}).  Hence it is not a boundary value and so is
unchanged by rule 1) of \ref {3.1}, that is
$$\beta_{i'}'=\beta_{i'}.$$  In particular the fact that $\beta_i$
and $\beta_{i'}$ are neighbours implies that $\beta_i'$ and
$\beta_{i'}'$ are neighbours and we designate this as $\beta_i'
\frac{-}{\quad} \beta_{i'}'$, with the sign being that of the
scalar product.  (For the starred quantities $\beta^*_i=\epsilon_i
\beta_i$, one recalls that \textit{all} scalar products have
non-positive signs.)

As in \ref {3.6}, one checks that $\beta_{e'}=\beta_{e'}'$  is
linked \textit{via} $\iota_1$ through a chain defined by
non-vanishing scalar products of neighbours to $\beta_f'=\beta_f
+\iota_1$. We write this as $\beta_{e'}'\frac{-}{\quad}
\ldots\frac{+}{ \quad } \beta_f'$, with the signs having the same
meaning as before. Thus (previous to our proposed modification) we
obtain the chain
$$\beta_{i'}' \frac{-}{\quad} \beta_i'\frac{+}{\quad}
\beta_e'\frac{-}{\quad}\beta_{e'}' \frac{-}{\quad}
\ldots\frac{+}{\quad}
\beta_f'\frac{-}{\quad}\beta_{f'}'.\eqno{(*)}$$ %Here one may

Now we make the following modification (using a double prime to
make the distinction clear and writing
$\beta^{**}_i:=\epsilon_i\beta''_i$, with
$\Pi^{**}=\{\beta_i^{**}\}_{i \in I}$). Here exactly three double
primed elements are distinct from the single primed elements and
only these are described below.

Set $$\beta_i''=\beta_i +\iota_2-\beta_e, \beta_e''=-\iota_1,
\beta''_f=\beta_f.$$

Using the above observations and in particular the linking role of
$\iota_1$, we obtain the chain
$$\beta_{i'}''\frac{-}{\quad}  \beta_i''\frac{+}{\quad}
\beta_{e'}'' \frac{-}{\quad} \ldots \frac {-}{\quad}
\beta_e''\frac{+}{\quad} \beta_f''\frac{-}{\quad}
\beta_{f'}''.\eqno{(**)}$$

If $i(e)$ is not defined then the first two terms in both $(*)$
and $(**)$ are absent.

The transition between $(*)$ and $(**)$ above is illustrated in
the passage of the left to the right hand side of Figure $5$
(resp. Figure $6$) when $\iota_1=\iota_2$ is a simple (resp.
compound) interval value. Observe how the long link between
$\beta'_{e'}$ and $\beta'_f$, propagated in the first instance
through $\iota_1$, becomes transformed to a long link between
$\beta''_{e'}$ and $\beta''_e$ similarly propagated in the first
instance through $\iota_1$.

\begin{center}

\begin{picture}(600,200)(-10,50)
\put(20,50){\line(0,1){160}} \put(300,50){\line(0,1){160}}
\multiput(40,190)(5,0){22}{\line(1,0){3}}
\multiput(170,190)(5,0){22}{\line(1,0){3}}

\multiput(40,170)(5,0){22}{\line(1,0){3}}
\multiput(180,170)(5,0){20}{\line(1,0){3}}

\multiput(40,130)(5,0){22}{\line(1,0){3}}
\multiput(170,130)(5,0){22}{\line(1,0){3}}

\multiput(40,110)(5,0){22}{\line(1,0){3}}
\multiput(170,110)(5,0){22}{\line(1,0){3}}

\multiput(40,90)(5,0){22}{\line(1,0){3}}
\multiput(170,90)(5,0){22}{\line(1,0){3}}

\multiput(40,70)(5,0){22}{\line(1,0){3}}
\multiput(170,70)(5,0){22}{\line(1,0){3}}

\multiput(-80,170)(5,0){20}{\line(1,0){3}}
\multiput(320,170)(5,0){16}{\line(1,0){3}}

\multiput(-80,90)(5,0){20}{\line(1,0){3}}
\multiput(320,110)(5,0){16}{\line(1,0){3}}

\put(155,187){$\beta_{i'}$}

\put(155,167){$\beta_{i(e)}$}

\put(155,127){$\beta_{e'}$}

\put(155,107){$\beta_e$}

\put(155,87){$\beta_f$}

\put(155,67){$\beta_{f'}$}

\put(-90,167){$c_1$} \put(-90,87){$c_2$}

\put(400,167){$c'_1$} \put(400,107){$c'_2$}

\put(25,155){$A$} \put(305,155){$A$} %\put(305,155){$A$}

 \put(25,95){$B$} \put(305,95){$B$}

\put(20,150){\circle{9}} \put(300,150){\circle{9}}

\linethickness{0.4mm}
\put(0,190){\line(1,0){40}}\put(280,190){\line(1,0){40}}
\put(0,130){\line(1,0){40}}\put(0,70){\line(1,0){40}}
\put(280,150){\line(1,0){40}}\put(280,110){\line(1,0){40}}
\put(280,90){\line(1,0){40}}\put(280,70){\line(1,0){40}}

\put(-20,170){\line(1,0){60}} \put(260,170){\line(1,0){60}}
\put(0,150){\line(1,0){60}} \put(260,130){\line(1,0){60}}
\put(-20,110){\line(1,0){60}} \put(0,90){\line(1,0){60}}

\put(40,170){\line(0,1){20}} \put(320,170){\line(0,1){20}}
\put(0,130){\line(0,1){20}} \put(320,130){\line(0,1){20}}
\put(40,110){\line(0,1){20}} \put(320,90){\line(0,1){20}}
\put(0,70){\line(0,1){20}} \put(280,70){\line(0,1){20}}

\put(260,130){\line(0,1){40}} \put(280,110){\line(0,1){40}}

\put(-20,110){\line(0,1){60}} \put(60,90){\line(0,1){60}}

\multiput(20,60)(0,20){8}{\circle*{3}}
\multiput(300,60)(0,20){8}{\circle*{3}}
\end{picture}

\end{center}
\begin{center}
{\it Figure 5.}
\end{center}

\textit{Passing from left to right illustrates the transition from
$(*)$ to $(**)$ in \ref {3.8}, when $\iota_1=\iota_2$ and is a
simple interval value. The conventions of Figure 1 apply.  The new
changed values according to the discussion following $(*)$ of \ref
{3.8} are indicated by a prime.}

\bigskip

\begin{center}

\begin{picture}(600,300)(-10,-40)
\put(20,-30){\line(0,1){240}} \put(300,-30){\line(0,1){240}}
\multiput(40,190)(5,0){22}{\line(1,0){3}}
\multiput(170,190)(5,0){22}{\line(1,0){3}}

\multiput(40,170)(5,0){22}{\line(1,0){3}}
\multiput(180,170)(5,0){20}{\line(1,0){3}}

\multiput(40,50)(5,0){22}{\line(1,0){3}}
\multiput(170,50)(5,0){22}{\line(1,0){3}}

\multiput(40,30)(5,0){22}{\line(1,0){3}}
\multiput(170,30)(5,0){22}{\line(1,0){3}}

\multiput(40,10)(5,0){22}{\line(1,0){3}}
\multiput(170,10)(5,0){22}{\line(1,0){3}}

\multiput(40,-10)(5,0){22}{\line(1,0){3}}
\multiput(170,-10)(5,0){22}{\line(1,0){3}}

\multiput(-80,170)(5,0){20}{\line(1,0){3}}
\multiput(340,170)(5,0){16}{\line(1,0){3}}

\multiput(-80,130)(5,0){20}{\line(1,0){3}}
\multiput(-80,90)(5,0){20}{\line(1,0){3}}

\multiput(320,130)(5,0){20}{\line(1,0){3}}
\multiput(320,90)(5,0){20}{\line(1,0){3}}

\multiput(-80,10)(5,0){20}{\line(1,0){3}}
\multiput(320,30)(5,0){20}{\line(1,0){3}}

\put(155,187){$\beta_{i'}$}

\put(155,167){$\beta_{i(e)}$}

\put(155,47){$\beta_{e'}$}

\put(155,27){$\beta_e$}

\put(155,7){$\beta_f$}

\put(155,-13){$\beta_{f'}$}

\put(-100,167){$c_{0,1}$} \put(-100,7){$c_{0,2}$}

\put(420,167){$c'_{0,1}$} \put(420,27){$c'_{0,2}$}

\put(-90,127){$c_1$} \put(-90,87){$c_1$}

\put(420,127){$c_1$} \put(420,87){$c_1$}

\put(25,155){$A$} \put(305,155){$A$} \put(25,95){$A$}
\put(305,95){$A$}

 \put(25,115){$B$} \put(305,115){$B$}
 \put(25,15){$B$} \put(305,15){$B$}

\put(20,150){\circle{9}} \put(300,150){\circle{9}}
\put(20,110){\circle{9}} \put(300,110){\circle{9}}
\put(20,70){\circle{9}} \put(300,70){\circle{9}}

\linethickness{0.4mm}
\put(0,190){\line(1,0){40}}\put(280,190){\line(1,0){40}}
\put(0,130){\line(1,0){40}}
\put(0,110){\line(1,0){40}}\put(280,190){\line(1,0){40}}
\put(0,90){\line(1,0){40}} \put(0,70){\line(1,0){40}}
\put(0,50){\line(1,0){40}}\put(0,-10){\line(1,0){40}}

\put(0,70){\line(1,0){40}}

\put(280,170){\line(1,0){40}}\put(280,130){\line(1,0){40}}
\put(280,110){\line(1,0){40}}
\put(280,90){\line(1,0){40}}\put(280,70){\line(1,0){40}}
\put(280,10){\line(1,0){40}}\put(280,-10){\line(1,0){40}}

 \put(280,150){\line(1,0){50}} \put(280,30){\line(1,0){50}}

\put(0,170){\line(1,0){60}} \put(280,170){\line(1,0){60}}
\put(-20,150){\line(1,0){60}} \put(-20,10){\line(1,0){60}}
\put(-20,10){\line(1,0){60}} \put(280,50){\line(1,0){60}}
\put(0,30){\line(1,0){60}}

\put(0,170){\line(0,1){20}} \put(280,170){\line(0,1){20}}
\put(40,130){\line(0,1){20}} \put(280,130){\line(0,1){20}}
\put(0,110){\line(0,1){20}} \put(320,110){\line(0,1){20}}
\put(40,90){\line(0,1){20}} \put(280,90){\line(0,1){20}}
\put(0,70){\line(0,1){20}} \put(320,70){\line(0,1){20}}
\put(40,50){\line(0,1){20}} \put(280,50){\line(0,1){20}}
\put(0,30){\line(0,1){20}}
\put(280,10){\line(0,1){20}}\put(40,-10){\line(0,1){20}}
\put(320,-10){\line(0,1){20}}

\put(340,50){\line(0,1){120}} \put(330,30){\line(0,1){120}}

\put(-20,10){\line(0,1){140}} \put(60,30){\line(0,1){140}}

\multiput(20,-20)(0,20){12}{\circle*{3}}
\multiput(300,-20)(0,20){12}{\circle*{3}}
\end{picture}

\end{center}
\begin{center}
{\it Figure $6$.}
\end{center}

\textit{Passing from left to right illustrates the transition from
$(*)$ to $(**)$ in \ref {3.8}, when $\iota_1=\iota_2$ and is a
compound interval value. The conventions of Figure 1 apply. The
new changed values according to the discussion following $(*)$ of
\ref {3.8} are indicated by a prime.}

\bigskip

Finally let us suppose that $\varphi(t)$ is an end point lying
(necessarily) in $B$.  Let $\varphi(s)$ be the closest turning
point to $\varphi(t)$.  By Lemma \ref {2.4} we have $\varphi(s)
\in A$. Denote the interval value $\iota_{t,s}$ simply by $\iota$.

Suppose that the unique nil boundary value to the turning point
$\varphi(s)$ lies in the support of  $I_{t,s}$ (rather than in the
support of the adjacent interval). If $\varphi(s)$ admits a second
boundary value say $\beta_i$, then this is changed to
$\beta_i':=\beta_i+\iota$, noting here that $i=i(e)$.  Moreover we
obtain the chain described in $(*)$ except that the two terms on
the right hand side are not defined. Then parallel to the above we
write
$$\beta_i''=\beta_i +\iota-\beta_e, \beta_e''=-\iota.$$ This gives
the chain described in $(**)$ except that the two terms on the
right hand side are not defined.

If either the unique nil boundary value to $\varphi(s)$ does not
lie in the support of $I_{t,s}$ (that is to say that it lies in
the support of the adjacent interval or $\varphi(s)$ is also an
end point) then $i(e)$ is not defined and we obtain $(*)$ with
just the two central terms. Then we set
$$\beta_e''=-\iota$$ and we obtain $(**)$ with just the two
central terms.

\begin {thm} In the above construction of $\Pi^{**}$, conditions a)-d),
of \ref {2.6} hold.
\end {thm}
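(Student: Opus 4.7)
My plan is to verify the four conditions of \ref{2.6} separately, leaning on three structural observations. First, only the three positions $e,i,f$ differ between $\Pi^*$ and $\Pi^{**}$, so $\beta_j^{**}=\beta_j^*$ for $j\notin\{e,i,f\}$ and the ``unchanged'' content of Proposition \ref{3.7} transfers for free to $\Pi^{**}$ at these positions. Second, $\beta_e$ and $\beta_f$ are the two non-nil boundary values of the same internal turning point $\varphi(t)\in B$, so by the sign rule of \ref{2.5} one has $\epsilon_e\epsilon_f=-1$. Third, since $\beta_i+\iota_2-\beta_e$ is a root and $\iota_2$ is an interval value containing $\beta_e$, the elements $\beta_i$ and $\beta_e$ are forced to sit in adjacent simple intervals separated by a single turning point, whence $\epsilon_i\epsilon_e=-1$ as well.

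Condition c) is immediate from the standing hypothesis in \ref{3.8} that $\varphi(t)$ has no isolated value as neighbour: $\iota_1$ then has support strictly larger than $\{\beta_e\}$, so $\beta_e''=-\iota_1$ is not a scalar multiple of $\beta_e$ and $\beta_e^{**}\neq\epsilon_e\beta_e$. Condition b) needs checking only at $i$ and $e$ (it is vacuous at $f$ because $\beta_f''=\beta_f$). For $\beta_i^{**}=\beta_i^*-\epsilon_i\beta_e$ the $\alpha_p$-coefficient matches that of $\beta_i^*$, namely $-1$, since $\beta_e$ is non-nil. For $\beta_e^{**}=\epsilon_f\iota_1$ (rewriting $-\epsilon_e\iota_1$ using $\epsilon_f=-\epsilon_e$), Lemma \ref{3.2} applied to the $\Pi^*$-change $\beta_f^*=\epsilon_f(\beta_f+\iota_1)$ already ensures $\epsilon_f\iota_1$ has $\alpha_p$-coefficient $-1$.

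Condition a) is read off from the chain $(**)$ of \ref{3.8} (together with its end-point analogue), which describes a path Dynkin diagram of type $A_{n-1}$. The only scalar products that need to be verified by hand lie at the three modified positions; these are routine once one recalls that $\iota_1,\iota_2$ both start at $\beta_e$ and uses the sign-flip ingredients above to turn the ``raw'' chain signs into non-positive products of the starred quantities.

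Condition d) is the most delicate. Having established a), $\Pi^{**}$ is a basis of $\mathfrak{h}^*$, so there is a unique $h^{**}\in\mathfrak{h}$ with $h^{**}(\beta_k^{**})=1$ for every $k\in I$, and a root $\alpha$ of $\mathfrak g$ lies in $\mathbb N\Pi^{**}$ iff $h^{**}(\alpha)>0$. For $j\notin\{e,i,f\}$ and for $j=f$ we have $\beta_j^{**}=\epsilon_j\beta_j$ and so $h^{**}(\epsilon_j\beta_j)=1$. For $j=i$ the relation $\beta_i^{**}=\epsilon_i(\beta_i+\iota_2-\beta_e)$ gives $h^{**}(\epsilon_i\beta_i)=1-\epsilon_i h^{**}(\iota_2-\beta_e)$; when $\iota_2$ is a simple interval value each $\beta_k$ with $k\in\mathrm{supp}(\iota_2)\setminus\{e\}$ is unchanged and lies in $\beta_e$'s simple interval, so $h^{**}(\beta_k)=\epsilon_k=\epsilon_e$, giving $h^{**}(\iota_2-\beta_e)=(|\mathrm{supp}(\iota_2)|-1)\epsilon_e$ and, via $\epsilon_i\epsilon_e=-1$, the positive value $h^{**}(\epsilon_i\beta_i)=|\mathrm{supp}(\iota_2)|$. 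The hard part will be carrying this count through the compound-interval case, where $\iota_2$ crosses internal turning points, the signs $\epsilon_k$ alternate, and some interior $\beta_k$'s may themselves have been modified when passing to $\Pi^*$; the required positivity should be secured by the odd-sum-of-simple-intervals structure imposed in \ref{3.4}, together with the separate treatment of the end-point-of-$B$ and isolated-$\iota_1$ variants indicated in \ref{3.8}.
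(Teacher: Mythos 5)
Your handling of conditions a), b), c) follows the paper's route, but your proof of condition d) has a genuine gap. The assertion ``for $j\notin\{e,i,f\}$ \ldots we have $\beta_j^{**}=\epsilon_j\beta_j$'' is false: already in the construction of $\Pi^*$ one non-nil boundary value is changed at \emph{every} internal turning point (rules 1) and 2) of \ref{3.1}), and there are $p-1$ internal turning points, so in general many indices $j$ outside $\{e,i(e),f\}$ satisfy $\beta_j^{**}=\epsilon_j\beta_j'$ with $\beta_j'=\beta_j+\iota\neq\beta_j$. For those indices $h^{**}(\epsilon_j\beta_j)=1-\epsilon_j h^{**}(\iota)$ is not the trivial value $1$, and its positivity is exactly the kind of count you only begin for $j=i(e)$; moreover you explicitly leave the compound-interval case open (``the hard part will be \ldots''), so condition d) is not established. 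A further complication your count would meet: since $\beta_e$ enters $\Pi^{**}$ only through $\beta_e^{**}=-\epsilon_e\iota_1$ and $\beta_i^{**}$, the value $h^{**}(\beta_e)$ is no longer $\pm 1$, so whenever $\beta_e$ (or another changed value) lies in the support of an added interval value the evaluation is not a simple cardinality count. (Two smaller points: your derivation of $\epsilon_i\epsilon_e=-1$ via ``adjacent simple intervals'' only covers simple $\iota_2$ --- for compound $\iota_2$ the conclusion holds by the parity of the number of turning points crossed, not by adjacency; and your argument for b) at $e$ via $\epsilon_f$ does not literally apply in the end-point variant of \ref{3.8}, where $f$ is undefined.)

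The paper's proof of d) avoids any such evaluation. It retains the triangular, nonnegative expansion underlying Lemma \ref{3.7.0} and Proposition \ref{3.7}: for every $j\neq e$ one still has $\epsilon_j\beta_j\in\beta_j^{**}+\sum_{k\in I}\mathbb N\beta_k^{**}$ (only three basis vectors were replaced, with $\beta_f''=\beta_f$, $\beta_i''=\beta_i'-\beta_e$, and only $\beta_e^{**}$ absorbing $\beta_e$), so each such $\epsilon_j\beta_j$, being a root, is a positive root with respect to $\Pi^{**}$; and $\epsilon_e\beta_e\notin\mathbb N\Pi^{**}$ is harmless because d) excludes the exceptional index. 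If you want to keep your $h^{**}$ formulation you must either prove this expansion or carry the positivity count uniformly over \emph{all} changed indices, including compound interval values and the interaction with $\beta_e$; as written, your argument for d) is incomplete.
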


\begin {proof}

Retain the notation of \ref {3.8}.

Comparison of $(*)$ and $(**)$ shows that condition a) for $\Pi^*$
(verified in Proposition \ref {3.7}) implies condition a) for
$\Pi^{**}$.

To show that condition d) holds observe that we still have
$\epsilon_i\beta_i=\beta^*_i+\sum_{j\in I}\mathbb N\beta^*_j$, for
all $i \in I\setminus \{e\}$ and so these elements are positive
roots with respect to $\Pi^*$.

By contrast $\beta_e''=-\iota_1=-\beta_e-\ldots,$ and so
$\epsilon_e\beta_e$ does \textit{not} lie in $\mathbb N\Pi^{**}$.
However condition d) does not require this.

Thus condition d) holds for $\Pi^{**}$.

Observe that $\iota_1$ expressed as an element of $\pi$ admits a
positive (resp. negative) coefficient of $\alpha_p$ if $\beta_e$
lies above (resp. below) $\varphi(t) \in B$ and that
$\epsilon_e=1$ (resp. $-1$).  We conclude that condition b) also
holds for $\beta^{**}_e$ and hence for $\Pi^{**}$.

Finally condition c) holds by construction.
\end {proof}

\subsection{}\label{3.9}

The case when positive and negative need to be interchanged in
\ref {3.4}, which can occur if $p$ is even, is similar.  We sketch
the necessary changes below.

Recall the notation of \ref {3.3} and in particular that $j_1=1$.
Before we had assumed the first of the two possibilities in \ref
{3.3}, namely that sg$(j_1)=1$, to hold.  Now we assume the second
possibility, namely that sg$(j_1)=-1$, to hold. Fix a positive odd
integer $u\leq r$ and set $j = j_u, k=j_{u+1}, \ell =j_{u+2}$.

When only $j$ is defined or when all three are defined the
solution we adopt is just the ``mirror image" of that described in
\ref {3.4}.  Indeed reading indices in the opposite direction
(more precisely applying the involution $i\mapsto n-i$ to $I$ it
follows that for example a negative to positive to negative
signature change becomes a positive to negative to positive
signature change. (Here we do \textit{not} mean to imply a
configuration produced by a coprime pair will transform to a
configuration produced by another coprime pair.  Indeed our
formalism allows for some configurations not necessarily coming
from coprime pairs. See \ref {3.10}.)

It remains to consider the case of a negative to positive
signature change, that is when just $j,k$ above are defined.
Surprisingly this is not quite a degeneration of the case when all
three are defined.

We remark that to describe a negative to positive signature change
repeated more than once (say twice to be specific) then we match a
negative to positive to negative signature change with a negative
to positive signature change (as illustrated by the right hand
side of Figure $7$ below).

Set $t=t_{2k-2}$ and let $s$ be the last turning point which
occurs in $A$.  One easily checks that the undecided element $d$
is just $\varphi(t)$.   We leave $\beta_t$ unchanged (which is
just what we would do if $\ell$ were defined) and set
$\beta'_{t-1}=\beta_{t-1}+\iota_{t,s}$.   The result is
illustrated in Figure 7. In view of this last change we have
labelled the changed element $\beta'_{t-1}$ by $c_{0,2}$ in Figure
$7$.

\begin{center}

\begin{picture}(120,460)(-60,-160)

\put(-200,120){\line(0,1){70}} \put(50,-130){\line(0,1){400}}

\multiput(73,180)(6,0){20}{\line(1,0){3}} \put(192,177){$c_2$}
\multiput(73,140)(6,0){20}{\line(1,0){3}} \put(192,137){$c_{0,1}$}
\multiput(73,100)(6,0){20}{\line(1,0){3}} \put(192,97){$c_{0,2}$}
\multiput(73,20)(6,0){20}{\line(1,0){3}} \put(192,17){$c_1$}
\multiput(73,-40)(6,0){20}{\line(1,0){3}} \put(192,-43){$c_2$}
\multiput(73,-80)(6,0){20}{\line(1,0){3}} \put(192,-83){$c_1$}

\multiput(-183,140)(6,0){9}{\line(1,0){3}} \put(-133,137){$c_1$}

\multiput(-195,190)(6,0){18}{\line(1,0){3}}
\multiput(-40,190)(6,0){15}{\line(1,0){3}}
\put(-85,187){$\varphi(t_{2(j-1)})$}

\multiput(-55,90)(6,0){18}{\line(1,0){3}}
\put(-75,87){$\varphi(t)$}

\multiput(-40,10)(6,0){15}{\line(1,0){3}}
\put(-85,7){$\varphi(t_{2k-1})$}

\multiput(-55,-90)(6,0){18}{\line(1,0){3}}
\put(-75,-93){$\varphi(s)$}

%\multiput(-26,150)(6,0){13}{\line(1,0){3}}
%\put(-70,147){$\varphi(t_{2j-1})$}

\put(-195,147){$A$}\put(55,247){$A$} \put(55,147){$A$}
\put(55,7){$A$} \put(55,-93){$A$}

\put(-195,187){$B$} \put(55,187){$B$} \put(55,87){$B$}
\put(55,-33){$B$} \put(55,-133){$B$}

%\multiput(-26,50)(6,0){13}{\line(1,0){3}}
%\put(-62,47){$\varphi(t_{2j})$} \put(55,-14){$A$}
%\multiput(-26,-10)(6,0){13}{\line(1,0){3}}
%\put(-70,-13){$\varphi(t_{2j+1})$} \put(55,46){$B$}

\put(-200,160){\circle{7}} \put(50,240){\circle{7}}
\put(50,160){\circle{7}} \put(50,120){\circle{7}}
\put(50,40){\circle{7}} \put(50,0){\circle{7}}
\put(50,-60){\circle{7}} \put(50,-100){\circle{7}}

%\multiput(93,40)(6,0){5}{\line(1,0){3}} \put(127,37){$c_2$}
\linethickness{0.4mm}

\put(50,80){\line(1,0){20}} \put(-200,160){\line(1,0){20}}

\put(-220,180){\line(1,0){40}}\put(-220,140){\line(1,0){40}}

\put(30,220){\line(1,0){40}}

\put(30,120){\line(1,0){40}} \put(30,60){\line(1,0){40}}
\put(30,40){\line(1,0){40}}

\put(30,20){\line(1,0){40}} \put(30,-20){\line(1,0){40}}
\put(30,-60){\line(1,0){40}}

\put(30,-80){\line(1,0){40}} \put(30,-120){\line(1,0){40}}

\put(30,240){\line(1,0){60}}\put(30,180){\line(1,0){60}}
\put(10,200){\line(1,0){60}}

\put(30,160){\line(1,0){80}}

\put(10,0){\line(1,0){60}}

\put(10,-40){\line(1,0){60}}

\put(-10,140){\line(1,0){80}}

\put(30,100){\line(1,0){80}} \put(30,-100){\line(1,0){80}}

%\put(30,20){\line(1,0){40}} \put(30,0){\line(1,0){40}}
%
%\put(10,60){\line(0,1){100}}
%

\put(-180,160){\line(0,1){20}}\put(70,200){\line(0,1){20}}
\put(30,220){\line(0,1){20}}\put(30,160){\line(0,1){20}}
\put(70,120){\line(0,1){20}} \put(30,100){\line(0,1){20}}
\put(70,60){\line(0,1){20}}

\put(30,40){\line(0,1){20}} \put(70,20){\line(0,1){20}}
\put(70,-20){\line(0,1){20}}

\put(70,-60){\line(0,1){20}} \put(30,-80){\line(0,1){20}}
\put(30,-120){\line(0,1){20}}

\put(-220,140){\line(0,1){40}}\put(30,-20){\line(0,1){40}}

\put(10,-40){\line(0,1){40}} \put(70,-120){\line(0,1){40}}

\put(90,180){\line(0,1){60}}

\put(110,-100){\line(0,1){200}}

%
%%
%\put(70,100){\line(0,1){20}} \put(70,60){\line(0,1){20}}
%\put(70,00){\line(0,1){20}} \put(90,40){\line(0,1){100}}
\multiput(-200,130)(0,20){4}{\circle*{3}}
\multiput(50,-130)(0,20){20}{\circle*{3}}

\multiput(-180,120)(0,4){5}{\line(0,1){2}}
\multiput(-10,140)(0,4){32}{\line(0,1){2}}
\multiput(10,200)(0,4){15}{\line(0,1){2}}
\multiput(110,160)(0,4){27}{\line(0,1){2}}
\multiput(10,260)(4,0){15}{\line(1,0){2}}
\multiput(70,260)(0,4){2}{\line(0,1){2}}

\end{picture}

\end{center}
\begin{center}
{\it Figure 7.} \\
\end{center}

\textit{This illustrates the negative to positive signature change
as discussed and in the notation of \ref {3.9}, more precisely in
the special case when $j=k-1$. The conventions are those of
Figures 1,2. The left hand side describes the situation when
$t_{2(j-1)}$ is a starting point. The right hand side describes
the situation when $\varphi(t_{2k-1})$ has positive signature but
is preceded by a component of negative signature and this being
repeated any number of times. Here the top half of the diagram
must and does match the negative to positive to negative
configuration which can be obtained by inverting Figure 4.
Curiously the bottom half of the diagram does not match the
positive to negative to positive configuration from Figure 4, nor
indeed need it do so. An end point of $\Pi^*$ in the sense of its
Dynkin diagram is $\beta^*_t$. For the diagram on the left a
second end point is $\beta^*_{t_{2j-1}-1}$.}

\bigskip

The modification of this construction when the exceptional value
is not changed is exactly as in \ref {3.8}. The extension of
Theorem \ref {3.8} to this case is proved similarly.

\subsection{}\label{3.10}

Although the solution we gave to Conditions a)-d) of \ref {2.6} is
unambiguous, one can easily check that other solutions can exist
for certain coprime pairs $p,q$.

The number of coprime pairs grows at most linearly with $n$ whilst
the number of possible signatures grows exponentially. Thus to
obtain all possible signatures one would need to have increasingly
large gaps between turning points.  In particular we do not claim
that the particular arrangements described in Figure 4 actually
arise from a coprime pair.

\subsection{}\label{3.11}

Combining Theorem \ref {3.8} with the remarks in \ref {1.8}, \ref
{2.6} and \ref {3.9}, we obtain the

\begin {cor}  Suggestion \ref {1.4} holds for a (truncated)
biparabolic of index $1$.
\end {cor}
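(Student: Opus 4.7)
The strategy is the one already set up in \S\ref{2.6} and \S\ref{1.8}: build a regular nilpotent element of $\mathfrak g$ whose restriction to $\mathfrak p$ (under the identification $\mathfrak p^*\simeq \kappa(\mathfrak p)$) agrees with $\eta$, and then read off that the error between it and $\eta$ lies in $\mathfrak m=\mathfrak k$. All the combinatorial heavy lifting has already been done in Section 3.

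\textbf{Plan.} First I would invoke Theorem \ref{3.8} together with its variant in \S\ref{3.9} (which covers the case where $p$ is even and the signature at the first turning point in $A$ is negative). This yields a simple root system $\Pi^{**}=\{\beta_i^{**}\}_{i\in I}$ satisfying conditions a)--d) of \S\ref{2.6}. Set
$$y'=\sum_{i\in I}x_{\beta_i^{**}}.$$
Condition a) says $\Pi^{**}$ is simple of type $A_{n-1}$, so some $w\in W$ conjugates $\Pi^{**}$ to $\pi$; equivalently $n_w\cdot y' = y_0:=\sum_{\alpha\in\pi}x_\alpha$ up to torus rescaling absorbed in an element of $H\subset B$. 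In particular $y'$ is regular nilpotent in $\mathfrak g$.

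Next I would apply Lemma \ref{1.9} to the nilpotent radical $\mathfrak n^{**}$ of the Borel attached to $\Pi^{**}$. Condition d) ensures that for every non-exceptional $i$ the root vector $x_{\epsilon_i\beta_i}$ already appears in $y'$ or lies in $[\mathfrak n^{**},\mathfrak n^{**}]$; hence, adding to $y'$ exactly those terms $\{x_{\epsilon_i\beta_i}:\epsilon_i\beta_i\ne\beta_i^{**},\ i\ne e\}$ produces an element $y''$ in the $N^{**}$-orbit of $y'$, and so $y''$ is again regular nilpotent. Since by conditions b) and c) every root $\beta$ with $x_\beta$ appearing in $y''-\sum_{i\in I\setminus\{e\}}x_{\epsilon_i\beta_i}$ has coefficient $-1$ on the non-compact root $\alpha_p$, all such root vectors lie in the nilradical $\mathfrak m$ of $\mathfrak p$.

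Combining this with Corollary \ref{2.5} and the definition of $\eta$ in \S\ref{1.8}, namely $\eta=\sum_{\beta\in\overline{\mathscr B}}x_\beta=\sum_{i\in I\setminus\{e\}}x_{\epsilon_i\beta_i}$ under the identification $\mathfrak p^*\simeq\kappa(\mathfrak p)$, we obtain that the restriction of $y''$ to $\mathfrak p$ equals $\eta$. Setting $\xi:=y''-\eta$, we have $\xi\in\mathfrak m=\mathfrak k$ and $\eta+\xi=y''\in\mathscr N(\mathfrak g)_{\mathrm{reg}}$. This is precisely the content of Suggestion \ref{1.4}, and (using the passage $y'\to y_0$) shows in addition that $y''$ has the form $n_w b\, y_0$ with $b\in B$ announced in \S\ref{1.7}.

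\textbf{Where the difficulty lies.} The only nontrivial input is Theorem \ref{3.8} itself, which is the main combinatorial result of the paper; once it is granted, Corollary \ref{3.11} is essentially a repackaging. The subtle point I would emphasise is that condition c), together with the exclusion of the exceptional index from d), is exactly what allows $y''|_{\mathfrak p}$ to equal $\eta$ on the nose rather than only up to an exceptional term, and this is precisely what forced the passage from $\Pi^*$ to $\Pi^{**}$ in \S\ref{3.8}.
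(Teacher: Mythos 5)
Your proposal is correct and follows essentially the same route as the paper: Corollary \ref{3.11} is obtained there exactly by combining Theorem \ref{3.8} (and its \ref{3.9} variant) with the dictionary of \ref{2.6} — condition a) giving regular nilpotency of $y'$, condition d) plus Lemma \ref{1.9} giving $y''$ regular nilpotent in the $N^{**}$-orbit, and conditions b), c) with Corollary \ref{2.5} and \ref{1.8} giving $y''|_{\mathfrak p}=\eta$, so $\xi=y''-\eta\in\mathfrak m=\mathfrak k$. Nothing further is needed.
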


\end{document}